 \newtheorem{thm}{Theorem}[section]
 \newtheorem{cor}[thm]{Corollary}
\newtheorem{lem}[thm]{Lemma}
 \newtheorem{prop}[thm]{Proposition}
 \newtheorem{claim}[thm]{Claim}
 \newtheorem{defn}[thm]{Definition}
 \newtheorem{rem}[thm]{Remark}
 \newtheorem{example}[thm]{Example}
 \newtheorem{assume}[thm]{ASSUMPTION}
\newcommand{\eps}{\varepsilon}
\newcommand{\Delt}{\Delta t}
\newcommand{\Delx}{\Delta x}
\newcommand \be     {\begin{equation}}
\newcommand \ee     {\end{equation}}
\newcommand {\RR} {\mathbb{R}}
\newcommand {\Rplus} {\mathbb{R}_+}
\newcommand {\Rplusc} {\overline{\mathbb{R}_+}}
\newcommand {\RplusT} {[0,T]}
\newcommand {\GRST} {\Gamma_k^{spacetime}}
\newcommand{\set}[1]{\left\{#1\right\}}
\newcommand \Gcal   {\mathcal G}
  \newcommand{\fU}{\mathfrak{U}}
  \newcommand{\suml}{\sum\limits}
  \newcommand{\dert}{\frac{\partial}{\partial t}}
 \numberwithin{equation}{section}
\begin{document}

\title [CONSISTENCY AND CONVERGENCE OF APPROXIMATE BALANCE LAWS]
{CONSISTENCY AND CONVERGENCE OF FINITE VOLUME APPROXIMATIONS TO NONLINEAR HYPERBOLIC BALANCE LAWS }

\author{Matania Ben-Artzi}
\address{Matania Ben-Artzi: Institute of Mathematics, The Hebrew University, Jerusalem 91904, Israel}
\email{mbartzi@math.huji.ac.il}
\author{Jiequan Li}
\address{Jiequan Li: Laboratory of Computational Physics,  Institute of Applied Physics and Computational Mathematics,
 Beijing , China; Center for Applied Physics and Technology, Peking University, China}
\email{li\_jiequan@iapcm.ac.cn}





\thanks{ The first author thanks the Institute of Applied Physics and Computational Mathematics, Beijing,  for the hospitality and support. The second author is supported by NSFC (nos. 11771054, 91852207) and Foundation of LCP. It is a pleasure to thank C. Dafermos, T. Gallou\"{e}t,  R. Herbin and M. Slemrod for many useful comments. We are very grateful to the two anonymous referees whose comments and suggestions have helped us in improving the paper.}


\keywords{balance laws, conservation laws, consistency, convergence,  discontinuous solutions, Lax-Wendroff theorem, finite volume schemes, high order schemes, numerical flux, Riemann problem, generalized Riemann problem}

\subjclass[2010]{Primary 65M12; Secondary 35L65, 65M08}

\date{\today}




\begin{abstract} This paper addresses the three concepts of \textit{ consistency, stability and convergence }  in the context of compact finite volume schemes for systems of nonlinear hyperbolic conservation laws. The treatment utilizes the framework of ``balance laws''. Such laws express the relevant physical conservation laws in the presence of discontinuities. Finite volume approximations employ this viewpoint, and the present paper can be regarded as being in this category. It is first shown that under very mild conditions a weak solution is indeed a solution to the balance law. The schemes considered here allow the computation of several quantities per mesh cell (e.g., slopes) and the notion of consistency must be extended to this framework. Then a suitable convergence theorem is established, generalizing the classical convergence theorem of Lax and Wendroff. Finally,  the limit functions are shown to be entropy solutions  by using a notion of ``Godunov compatibility'', which serves as a substitute to the entropy condition.

\end{abstract}
\maketitle

\section{\textbf{INTRODUCTION}}

       The foundational ``Lax Equivalence Theorem'' highlighted the close connection among the three concepts:  consistency, stability  and convergence. Indeed, in the context of linear evolution equations it  asserts that a consistent scheme is stable if and only if it is convergent \cite {richtmyer}. Although it was formulated for \textit{linear} evolution equations, it played a decisive role in the development of first order finite-difference or finite volume schemes, with special emphasis on hyperbolic conservation laws. The growing use of high-order schemes in this context has made it difficult to adapt the theorem in a straightforward fashion. This is particularly true for nonlinear hyperbolic conservation laws, where the
 presence (and formation) of  discontinuities does not easily allow the examination of ``consistency'' by standard Taylor expansions. Furthermore, compact (high-order) schemes require the computation of several quantities per mesh cell (e.g., slopes). As we shall see, in this case many of the existing definitions of consistency are not applicable.

  Another classical theorem in the context of approximations to conservation laws, namely, the  ``Lax-Wendroff Convergence Theorem'', uses a particular notion of consistency (see Definition ~\ref{deflwconsist} below) in order to establish convergence of discrete approximate solutions to weak solutions. Introducing a different notion of consistency necessarily forces a  revision of this convergence theorem.

 This paper addresses these issues by utilizing the framework of ``balance laws'', in one space dimension. Such laws are closely associated with the relevant physical  laws. Finite volume approximations employ this viewpoint, and the present paper can be regarded as belonging to this category.

  The aim of this paper is two-fold:
             \begin{itemize}
             \item Suggesting a version of the consistency condition that will be directly applicable to a wide array of schemes, allowing in particular for  high order, compact schemes, and taking into account the presence of discontinuities.
             \item Linking the consistency  of the discrete solution to the question of its convergence to the  solution of the balance law.
             \end{itemize}

             The focus here is on nonlinear hyperbolic balance laws, where discontinuities are formed, even for smooth initial data.

      To recall the meaning of ``consistency'', let us consider a vector function $u(x,t)$ taking values in $\RR^D$ and satisfying an evolution equation in $\RR\times\Rplusc$ of the form

      \be\label{eqevolution}
       u_t=\Phi(u),\quad t>0,
      \ee
        where $u_t=\dert u$ is the partial derivative of $u(x,t)$ with respect to the time variable $t,$ and $\Phi(u)$ is a general (not necessarily linear) operator involving spatial (with respect to $x-$variable) derivatives of $u$ (and possibly explicit dependence on $x$).

        Fixing a time interval $\Delt>0,$ the discretization procedure (or ``difference method'') is aimed at finding a sequence of functions $\set{\widetilde{u^n}(x)}_{n=0}^\infty,$ assumed to approximate the discrete sequence of values of the exact solution $\set{u(x,t_n),\,t_n=n\Delt}_{n=0}^\infty.$  The function $\widetilde{u^0}(x)$ approximates the initial data $u(x,0).$ The approximating functions are usually taken from a subspace of the ``admissible'' functions, those on which the operator $\Phi$ is acting (in some weak sense).

         Generally speaking (for a ``one step procedure''), there is a family of operators  $\set{\Phi_{\Delt},\,\Delt>0}$ generating the discrete (in time) sequence
\be
\label{eqPhikUn}\widetilde{u^{n+1}}(x)=\widetilde{u^n}(x)+\Phi_{\Delt}\widetilde{u^n}(x).
\ee
The common definition of consistency is the following \cite[Section 3.2]{richtmyer},~\cite[Section 5.4]{morton},~\cite[Section 4.1]{Tadmor}.

           \begin{defn}\label{defnconsistfirst} Let $q>0.$  The discrete scheme ~\eqref{eqPhikUn} is \textbf{consistent of order} $q$  with Equation ~\eqref{eqevolution} in the time interval $[0,T]$  if the exact solution satisfies
\be
\label{eqdefineconsist} u(x,t_{n+1})-[u(x,t_n)+\Phi_{\Delt}u(x,t_n)]=O(\Delt^{1+q}),\quad n\Delt<T.
\ee
           \end{defn}

           Observe that the equality ~\eqref{eqdefineconsist} involves a suitable norm on functions of $x,$ pertinent to the solution and approximating functions.

           \begin{rem}\label{remoldconsist1+q} In concrete cases, the verification of ~\eqref{eqdefineconsist} relies on analytical tools, notably Taylor's theorem. This poses a difficulty, since the solution is often discontinuous, as in the case of nonlinear hyperbolic conservation laws. The purpose of this paper is to introduce a consistency condition that is \textbf{meaningful also in the case of discontinuous solutions.}
           \end{rem}

             The discrete scheme ~\eqref{eqPhikUn} is actually only ``semi discrete'', since only the (continuous) time is   replaced by finite time steps. In practice, in a wide array of schemes (notably ``finite volume'')  the spatial coordinates are also discretized.  Restricting to a one-dimensional framework, a constant mesh size $\Delx>0$ is chosen, so that the ratio
         $$\
         \lambda=\frac{\Delt}{\Delx}\,\,\,\mbox{is a constant}.
         $$
   The approximating function $\widetilde{u^n}(x)$ is replaced by a discrete sequence $\widetilde{u^n_{disc}}=\set{\widetilde{u^n_j}}_{j=-\infty}^\infty,$ that is presumed to approximate the exact values
$\set{u(x_j,t_n}_{j=-\infty}^\infty$ at the spacetime grid points $\set{x_j=j\Delx,\,t_n=n\Delt}_{j=-\infty}^\infty.$

         Accordingly, the semi discrete $\Phi_{\Delt}$ is replaced by a fully discrete operator $\Phi_{\Delt,\Delx}$ and Equation ~\eqref{eqPhikUn} is replaced by a fully (i.e., spatial and temporal) discrete scheme
         \be\label
         {eqPhikUndisc}\widetilde{u^{n+1}_{disc}}=\widetilde{u^n_{disc}}+\Phi_{\Delt,\Delx}\widetilde{u^n_{disc}}.
         \ee

         It is clear how to formulate the consistency Definition ~\ref{defnconsistfirst} in this case:

  \begin{defn}
  \label{defnconsistfulldisc} Let $q>0.$ Denote by $u^n_{disc}=\set{u(x_j,t_n}_{j=-\infty}^\infty$ the set of values of $u$ at the spatial grid at time $t_n.$  The discrete scheme ~\eqref{eqPhikUndisc} is \textbf{consistent of order} $q$  with Equation ~\eqref{eqevolution} in the time interval $[0,T]$  if
           \be
           \label{eqdefineconsistdisc}u^{n+1}_{disc}-[u^n_{disc}+\Phi_{\Delt,\Delx}u^n_{disc}]=O(\Delt^{1+q}),\quad n\Delt<T.
           \ee
    \end{defn}

         In order to focus on systems of hyperbolic conservation laws of the form
 \be\label{eqconslaw} u_t+f(u)_x=0,\quad u,\,f(u)\in\RR^D,\quad (x,t)\in\RR\times\RR_+,
 \ee
 we now consider the issue of consistency of a fully discrete approximation to ~\eqref{eqconslaw}.

         The classical definition of consistency, introduced by Lax and Wendroff ~\cite{lax-wendroff}, involves a Lipschitz continuous function of $2l$ variables $g(\xi_1,\ldots,\xi_{2l})\in\RR^D,$ so that Equation ~\eqref{eqPhikUndisc} can be rewritten as
 \be\label{eqlaxwscheme}\aligned
           \widetilde{u^{n+1}_j}=\widetilde{u^{n}_j}-\lambda\Big[g(\widetilde{u^n_{j-l+1}}, \widetilde{u^n_{j-l+2}},\ldots,\widetilde{u^n_{j+l}})-g(\widetilde{u^n_{j-l}}, \widetilde{u^n_{j-l+1}},\ldots,\widetilde{u^n_{j+l-1}})\Big],\\
           -\infty<j<\infty.
                   \endaligned\ee

           \begin{defn}~\cite[Lax and Wendroff]{lax-wendroff}\label{deflwconsist} The scheme ~\eqref{eqlaxwscheme} is consistent with Equation  ~\eqref{eqconslaw} if
           \be\label{eqconsistlw}g(\xi,\ldots,\xi)=f(\xi),\quad \xi\in\RR.\ee
           \end{defn}

           This definition has proved to be very useful in the case of first-order schemes. The Lax-Wendroff theorem ~\cite{lax-wendroff} ensures that the approximate solutions obtained by a consistent and conservative scheme  ~\eqref{eqlaxwscheme}, and subject to some boundedness and (weak) convergence hypotheses, actually converge to a weak solution of ~\eqref{eqconslaw}. We refer to ~\cite{despres,elling,gallouet-herbin-latche,kroner} and references therein for various extensions of this convergence theorem (assuming first-order consistency).

           \begin{rem}\label{rem:gallouet-consist} The consistency definition ~\eqref{eqconsistlw} applies both to finite difference and finite volume schemes for \textit{uniform} grids. However, in the case of discrete approximations on non-uniform grids this definition needs to be carefully considered, since a given approximation can be consistent in the ``sense of finite differences'' but not so in the ``sense of finite volume'' schemes ~\cite[Remark 21.1]{eymard-gallouet}.
           \end{rem}

           Practical applications, as well as mathematical interests, require ``high order'' accuracy, or, using conventional  terminology, ``high order schemes''. Such a requirement can be accommodated by either one of two ways.

           \begin{itemize}
           \item Take a sufficiently large $l$ in ~\eqref{eqlaxwscheme}. In other words, extend considerably the ``stencil'' of dependence when evaluating $\widetilde{u^{n+1}_j}.$ This in turn involves a more complicated treatment of boundary conditions.
               \item Instead of considering only ``cell averages'' (where the value $\widetilde{u^n_j}$ is viewed as an average of the approximate solution  in the interval $(x_j-\frac{\Delx}{2},x_j+\frac{\Delx}{2})$), at time $t_n,$ use \textit{more information for each interval}, such as slopes or higher moments. This leads to a more complex discrete operator $\Phi_{\Delt,\Delx}$ in Equation \eqref{eqPhikUndisc},
 but enables the use of a ``compact scheme'',where only the neighboring intervals, centered at $x_{j\pm 1},$ are involved in determining the approximate solution $\widetilde{u^{n+1}_j}.$
       \end{itemize}
               The second alternative above is the one that is most widely implemented in various state-of-the-art schemes, such as MUSCL ~\cite{B.vanLeer-79}, GRP ~\cite{BenArtzi-Falcovitz-84,BenArtzi-Falcovitz-2003,MBA-Jiequan-Numer}, ADER ~\cite{Toro}, PPM ~\cite{colella}, DG ~\cite{Shu}, WENO ~\cite{Liu-Osher,Shu}. We refer also to the survey paper ~\cite[Section 3.3]{Tadmor}.
             For all these schemes,  the concept of consistency must be clearly defined and its connection to the question of convergence to the exact solution should be clarified.

             Remark that in our discussion of the system ~\eqref{eqconslaw} the flux function $f(u)$ depends only on the unknown $u(x,t).$ It is very natural (both mathematically and in applications) to try and extend this to more general flux functions. In this case, the issues of consistency and convergence should be addressed. While there is no general framework for such extensions, there are many studies of particular cases, for example ~\cite{seguin-vovelle}.

             The outline of the paper is as follows.

             Section ~\ref{secbalancelaw} deals with the basic definition of a hyperbolic ``balance law''. In order to show that the classically defined weak solutions are indeed solutions to the balance law, the continuity properties of the associated fluxes need to be studied. Theorem ~\ref{thmweakbalance} states that under very general conditions (certainly satisfied by ``entropy solution'') these fluxes are in fact locally Lipschitz continuous. It seems to be a new fact even in the case of scalar conservation laws.

             Section ~\ref{secconsistency} introduces the notion of ``approximate fluxes'' and their ``order of consistency'' (Definition ~\ref{def:consistalpha}). These notions rely on the order of the spaces used in the approximation (Assumption ~\ref{assumeVk}). In particular, it is shown (Corollary ~\ref{corconsistPhik}) that the order of consistency as introduced here conforms with the previous notion when the latter is applicable. A general definition of \textit{finite volume schemes} (FVS) is introduced (Definition ~\ref{defnapproxsoln}), that also depends on  the order of the approximating space.

             It should be emphasized that \textit{in case of discontinuous solutions, our definition of consistency may yield different orders than commonly used.} Thus, the Godunov scheme (Example ~\ref{examplegodunov1}) is of infinite order when applied to piecewise-constant functions but of order zero when applied to piecewise-linear functions. Similarly, the GRP or MUSCL schemes ~\eqref{equpgradeGodflux} are only first-order consistent, while second-order consistency requires  smooth solutions.

              Section ~\ref{seclaxwendroff} deals with the convergence of the approximate solutions to the exact solution of the balance law. It is interesting to recall here the following paragraph from DiPerna's paper ~\cite{diperna}: ``In the setting of the scalar conservation it remains an open problem to establish convergence of conservative finite difference schemes which are accurate to second order. Stability and convergence results have been obtained so far only for methods which are precisely accurate to first order.''

              Since then, the convergence of various second-order schemes to the unique entropy solution (in the scalar case) has been established in ~\cite{DSCS, Bouchut-Bou-Perthame, chain, goodman,LeFloch-Liu-99, Lions-Souganidis-95,S.Osher-85, Vila}. These studies treated specific schemes and employed suitable discrete entropy inequalities. In particular, they had no need to introduce a general concept of consistency as had been done in the first-order case described above.

            In Theorem ~\ref{thmconverge} the convergence of the approximate solutions to solutions of the balance law is proved for a general finite volume scheme, under certain boundedness conditions. The hypothesis that the scheme is consistent of order $q>0$ plays a crucial role. The natural question to be asked is whether or not the limit functions satisfy the entropy condition. In order to provide an affirmative answer the concept of ``Godunov compatibility'' is introduced in Definition  ~\ref{defineGodtype} and is used in Theorem ~\ref{thmunique}. It should be noted that the compatibility condition is based on the assumption that the Godunov scheme converges to an entropy solution; this fact has actually been proved  in the scalar case ~\cite{GodlewskiRaviart} where it is known to be unique and in a class of $2\times 2$ systems by DiPerna ~\cite{diperna}. We refer also to ~\cite{ding-chen} in the case of isentropic gas dynamics.

\section{\textbf{THE FUNDAMENTAL PRINCIPLE OF THE HYPERBOLIC BALANCE LAW}}\label{secbalancelaw}
\subsection{\textbf{GENERAL SYSTEMS IN ONE SPACE DIMENSION }}\label{subsecsystemlaw}

    We now focus on the case of a system of conservation laws ~\eqref{eqconslaw}
    $$
    u_t+f(u)_x=0,\quad x\in \RR,\,\,t\geq 0,\,\,\,u,f(u)\in\RR^D,
    $$
    subject to initial data
    \be\label{eqinitdata}
    u(x,0)=u_0(x),\quad x\in\RR.
    \ee

     We consider the case of a single space dimension. In order to avoid complications caused by the presence of boundaries, we limit our considerations to the pure initial value problem set on the whole line $\RR.$

 Formally, by integration we infer that for  every rectangle $Q=[x_1,x_2]\times[t_1,t_2]\subseteq \RR\times\overline{\RR_+}$ the following equality holds.

    \be\label{eqbalancecons}
     \int_{x_1}^{x_2}u(x,t_2)dx-\int_{x_1}^{x_2}u(x,t_1)dx=
     -\Big[\int_{t_1}^{t_2}f(u(x_2,t))dt-\int_{t_1}^{t_2}f(u(x_1,t))dt\Big].
    \ee

\begin{rem}\label{remgaussgreen} Equation ~\eqref{eqbalancecons} can be considered as an integrated (formal) form of ~\eqref{eqconslaw}, using the Gauss-Green theorem. However, the application of this theorem is certainly not straightforward, since the function $u(x,t)$ is not even continuous (see ~\cite[Section 4.5]{federer}). We refer to ~\cite{chen} and ~\cite[Chapter I]{dafermos} for an abstract discussion of this topic. Regarding the right-hand side of ~\eqref{eqbalancecons} one needs to keep in mind the following comment concerning the identification of the boundary flux:``the drawback of this, functional analytic, demonstration is that it does not provide any clues on how the $q_\mathfrak{D}$ may be computed from $A$'' ~\cite[Section 1.3]{dafermos}.
    \end{rem}

    In fact, the meaning of the $x$ and $t$ derivatives must be clarified since the solutions generate discontinuities, such as shocks or interfaces. As is well known, the concept of a \textbf{weak solution} is introduced  precisely in order to handle this difficulty  ~\cite[Chapter 11]{evans}, as follows.

    For every rectangle $Q=[x_1,x_2]\times[t_1,t_2]\subseteq \RR\times\Rplusc,$ if $\phi(x,t)\in C^\infty_0(Q),\,$ then
        \be\label{eqweaksol}
        \int_{t_1}^{t_2}\int_{x_1}^{x_2}[u(x,t)\phi_t+f(u(x,t))\phi_x]dx\,dt=0.
        \ee

        In light of the above comments, the proof of the following theorem is not so obvious.

      \begin{thm}\label{thmweakbalance} Let $u(x,t)$ be a weak solution to the system ~\eqref{eqconslaw}, with initial function $u_0\in L^1(\RR)\cap L^\infty(\RR).$

      Assume that $u(x,t)$ satisfies the following properties.

      \begin{itemize}\item $u(x,t)$ is locally bounded in $\RR\times\Rplusc.$

      \item For every fixed interval $[x_1,x_2]\subseteq\RR$ the mass \be\label{eqmtcont} m(t)=\int\limits_{x_1}^{x_2} u(x,t)dx\,\,   \mbox{is a well-defined and continuous function of}\,\, t\in\Rplusc.\ee
      \end{itemize}
      Then we have:
      \begin{enumerate}
 \item[(i)] For every fixed $[t_1,t_2]\subseteq\RR$ the integral $g(x)=\int_{t_1}^{t_2}f(u(x,t))dt$ is locally
 Lipschitz continuous in $x\in\RR.$
\item[(ii)]  $u(x,t)$ satisfies the equality ~\eqref{eqbalancecons} in every rectangle $Q$.
\end{enumerate}
      \end{thm}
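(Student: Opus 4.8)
The plan is to distill the weak formulation \eqref{eqweaksol} into a single distributional identity in the $x$--variable that delivers both claims at once. First I would fix $\psi\in C_0^\infty(\RR)$ and, for small $\delta>0$, test \eqref{eqweaksol} against the product $\phi(x,t)=\psi(x)\chi_\delta(t)$, where $\chi_\delta$ is smooth, supported in a $\delta$--neighbourhood of $[t_1,t_2]$, equal to $1$ on $[t_1,t_2]$, and such that $\chi_\delta'$ concentrates a unit mass near each endpoint (one--sided near $t_1$ if $t_1=0$, so as to remain in $\Rplusc$). This is a legitimate test function on any rectangle containing $\mathrm{supp}\,\psi\times\mathrm{supp}\,\chi_\delta$, and \eqref{eqweaksol} becomes
\be
\int \chi_\delta'(t)\Big(\int u(x,t)\psi(x)\,dx\Big)dt+\int\!\!\int f(u)\,\psi'(x)\,\chi_\delta(t)\,dx\,dt=0.
\ee
Writing $m_\psi(t)=\int u(x,t)\psi(x)\,dx$, which is continuous in $t$ by the mass--continuity hypothesis \eqref{eqmtcont} (extended from intervals to smooth $\psi$ by uniform approximation with step functions), the first term tends to $m_\psi(t_1)-m_\psi(t_2)$ as $\delta\to0$, while the second tends, by dominated convergence and Fubini (local boundedness of $f(u)$, compact support of $\psi'$), to $\int\psi'(x)g(x)\,dx$. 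This yields the key identity
\be\label{eqcore}
\int_\RR \psi'(x)\,g(x)\,dx=m_\psi(t_2)-m_\psi(t_1)=\int_\RR \big[u(x,t_2)-u(x,t_1)\big]\psi(x)\,dx,\qquad \psi\in C_0^\infty(\RR).
\ee

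For claim (i), I would read \eqref{eqcore} as the assertion that the distributional derivative of $g$ equals the function $u(\cdot,t_1)-u(\cdot,t_2)$. Since $u$ is (genuinely) locally bounded, the time--slices $u(\cdot,t_1)$ and $u(\cdot,t_2)$ are bounded on every compact interval, so $g'\in L^\infty_{\mathrm{loc}}(\RR)$; a distribution with locally bounded derivative has a locally Lipschitz representative, with local Lipschitz constant controlled by $2\sup|u|$. This is exactly (i).

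With (i) in hand, claim (ii) follows by returning to \eqref{eqcore} and letting $\psi=\psi_\eta\to\mathbf 1_{[x_1,x_2]}$ (rising on $[x_1-\eta,x_1]$, falling on $[x_2,x_2+\eta]$). The right--hand side converges to $\int_{x_1}^{x_2}[u(x,t_2)-u(x,t_1)]\,dx$ by dominated convergence, while the left--hand side converges to $g(x_1)-g(x_2)$ precisely because $g$ is now known to be continuous. Rearranging $g(x_1)-g(x_2)=\int_{x_1}^{x_2}[u(x,t_2)-u(x,t_1)]\,dx$ gives \eqref{eqbalancecons}.

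The hard part, and the genuine content of the theorem, is the spatial regularity of the flux, which is \emph{not} assumed: $f(u(\cdot,t))$ need not be continuous in $x$ for a single $t$. The device that overcomes this is that the weak equation controls $g$ only through its distributional $x$--derivative \eqref{eqcore}, and it is the \emph{time}--integration that converts the merely bounded datum $u(\cdot,t_1)-u(\cdot,t_2)$ into Lipschitz regularity of $g$. Thus the two hypotheses play complementary roles: continuity of the mass in $t$ legitimises the endpoint limit producing \eqref{eqcore}, while local boundedness both upgrades \eqref{eqcore} to Lipschitz continuity and underlies the dominated--convergence passage in the $x$--limit. Care is needed only at $t_1=0$ (one--sided mollification) and in verifying that $m_\psi$ inherits continuity for smooth $\psi$.
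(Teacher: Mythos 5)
Your proof is correct and follows essentially the same route as the paper: both test the weak formulation with a separated product $\psi(x)\cdot(\text{time cutoff})$, invoke the mass-continuity hypothesis to pass to the endpoint limit and obtain the identity $\int g\,\psi'\,dx=\int[u(\cdot,t_2)-u(\cdot,t_1)]\psi\,dx$, deduce that $g$ has locally bounded distributional derivative (hence a locally Lipschitz representative with constant $2\sup|u|$), and then let $\psi$ tend to the indicator of $[x_1,x_2]$, using the just-established continuity of $g$, to obtain the balance equation. The only cosmetic difference is that you read off $g'=u(\cdot,t_1)-u(\cdot,t_2)$ directly from the identity, whereas the paper produces the bounded derivative abstractly via $L^1$--$L^\infty$ duality; the content is the same.
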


   \begin{proof}For every rectangle $Q=[x_1,x_2]\times[t_1,t_2]\subseteq \RR\times\Rplusc$ we define
   \be\label{equxtmaxu0}
      C_Q=\sup\set{ |u(x,t)|, \quad (x,t)\in Q}.
       \ee
    Note that in ~\eqref{eqbalancecons},  the ``fixed time'' integrals in the left-hand side exist by the assumed continuity (in time) of $m(t).$ Pick $\phi(x,t)=\theta(t)\psi(x)$ in Equation ~\eqref{eqweaksol}, where $\theta\in C^\infty_0(t_1,t_2)$ and $\psi\in C^\infty_0(x_1,x_2).$ Take $0\leq\theta\leq 1$ and $\theta(t)=1$ for $t_1+\eps\leq t\leq t_2-\eps.$ Letting $\eps\to 0,$ Equation ~\eqref{eqweaksol} yields
       \be\label{eqintuf}
        \int_{x_1}^{x_2}[u(x,t_2)-u(x,t_1)]\psi(x)dx=\int_{x_1}^{x_2}\big\{\int_{t_1}^{t_2}f(u(x,t))dt\big\}\psi'(x)dx.
       \ee
Letting  $g(x)=\int_{t_1}^{t_2}f(u(x,t))dt,$ Equation ~\eqref{eqintuf} can be rewritten as
 $$
          \int_{x_1}^{x_2}[u(x,t_2)-u(x,t_1)]\psi(x)dx=\int_{x_1}^{x_2}g(x)\psi'(x)dx.
$$
Since $|u(x,t)|\leq C_Q$ it follows that
         $$
         \Big|\int_{x_1}^{x_2}g(x)\psi'(x)dx\Big|\leq 2C_Q\|\psi\|_1.
         $$
 Define the linear functional
          $$
          \Gcal\psi=\int_{x_1}^{x_2}g(x)\psi'(x)dx,\quad \psi\in C^\infty_0(x_1,x_2).
          $$
          The above estimate shows that $\Gcal$ is continuous with respect to the $L^1$ norm. The density of $C^\infty_0(x_1,x_2)$ in $L^1(x_1,x_2)$ and the $L^1,\,L^\infty$ duality entail that there exists a function $r(x)\in L^\infty(x_1,x_2)$ such that
          \be\label{eqdualgr}
           \int_{x_1}^{x_2}g(x)\psi'(x)dx=\int_{x_1}^{x_2}r(x)\psi(x)dx,\quad \psi\in C^\infty_0(x_1,x_2).
          \ee
           Since $r\in L^p(x_1,x_2)$ for all $p<\infty$ it follows that $g\in W^{1,p}(x_1,x_2),$  the Sobolev space of order $p$ for all $p<\infty.$ Turning back to ~\eqref{eqdualgr} we see that the distributional derivative of $g(x)$ satisfies $g'(x)=-r(x)$ in $(x_1,x_2).$ Since the above estimates depend only on $Q,$ the function $g(x)$ is Lipschitz  in $\RR.$

             An explicit estimate of the Lipschitz constant is readily obtained from Equation ~\eqref{eqintuf}: the above argument (for the $t$ variable) can be repeated; take $0\leq\psi\leq 1$ and $\psi(x)=1$ for $x_1+\eps\leq x\leq x_2-\eps.$ Letting $\eps\to 0$  yields
       $$
       \int_{x_1}^{x_2}[u(x,t_2)-u(x,t_1)]dx=-[g(x_2)-g(x_1)].
       $$
    This establishes the validity of the equality ~\eqref{eqbalancecons} and moreover
          $$
          |g(x_2)-g(x_1)|\leq 2C_Q |x_2-x_1|.
          $$
       \end{proof}
           \begin{rem}\label{remmtcont} We could replace the continuity
           assumption ~\eqref{eqmtcont} by the stronger assumption that the map $t\to u(\cdot,t)\in L^\infty(\RR)\,\mbox{weak}^\ast$ is continuous. This latter assumption is universally imposed when dealing with entropy solutions to nonlinear conservation laws ~\cite[Section 4.5]{dafermos}. However the continuity condition ~\eqref{eqmtcont} is valid for weak solutions that are not necessarily entropy solutions. In fact, it holds for weak solutions that have bounded (locally in time) total variation. This is expressed by Dafermos as ``mechanism of regularity transfer from the spatial to the temporal variables'' ~\cite[Theorem 4.3.1]{dafermos}.
           \end{rem}

       The statement of Theorem ~\ref{thmweakbalance} is closely related to the more fluid dynamical viewpoint: the ``conservation law'', which is a \textbf{partial differential equation,} is replaced by a \underline{``balance law''}. This latter viewpoint plays a central role in this paper, and is introduced in the following paragraphs.

       We assume the existence of two Banach spaces, $\fU^b$ and $\fU^c,$ with respective norms $\|\cdot\|_b,\,\,\|\cdot\|_c,$ and set \be\label{eqdeffU}\fU=\fU^b\cap\fU^c.\ee

       \begin{defn}\label{defnnormU}
           The norm in $\fU$ is
       \be\label{eqnormU}
      \|w\|_{\fU}=\|w\|_b+\|w\|_c.
      \ee
     \end{defn}
$\fU$ is assumed to be a ``persistence space'' for the class of solutions introduced below,  in the sense that they satisfy, for every initial data $u_0\in \fU,$
   \be\label{eqpersist}
   t\hookrightarrow u(\cdot,t)\in C(\Rplus,\fU^c)\cap L^\infty(\Rplusc,\fU^b).
   \ee
      Note that as in the case of weak solutions, no uniqueness assumption is imposed at this stage.

 We shall also make use of the Fr\'{e}chet space $L^1_{loc}(\RR)$ whose  metric is given (for two vector-valued functions $f,\,g$) by
       \be\label{eqL1locmetric}
      d(f,g)=\suml_{N=1}^\infty 2^{-N}\,\,\frac{\int\limits_{-N}^N |f(x)-g(x)|dx}{1+\int\limits_{-N}^N |f(x)-g(x)|dx},\quad f,g\in L^1_{loc}(\RR).
      \ee

    \begin{defn}\label{defnbalance}  Let $u_0\in\fU.$ The function $u(\cdot,t)\in C(\Rplusc,\fU^c)\cap L^\infty(\Rplus,\fU^b)$ is a solution to the balance law ~\eqref{eqbalancecons} corresponding to the partial differential equation  ~\eqref{eqconslaw}  if the following  conditions are satisfied.
    \begin{itemize}
    \item  For every $x\in\RR$ and interval  $[t_1,t_2]\subseteq \overline{\RR_+}$ the integral $\int_{t_1}^{t_2}f(u(x,t))dt$ is well defined, and is a continuous function of $x\in\RR.$
    \item  For every $t\geq 0$ and interval  $[x_1,x_2]\subseteq \RR$ the integral $\int_{x_1}^{x_2}u(x,t)dx$ is well defined and is a continuous function of $t.$
     \item For every rectangle $[x_1,x_2]\times[t_1,t_2]\subseteq \RR\times\Rplusc$ the balance equation ~\eqref{eqbalancecons} is satisfied.





    \end{itemize}
    \end{defn}
    \begin{rem}\label{rembalancelaw} Our definition of a solution to the balance law conforms to that introduced in ~\cite[Chapter I]{dafermos}. In fact, in Dafermos' book the balance equation is assumed to hold for any domain in spacetime. We note that other authors use various other terms, such as the ``integral conservation law'', and the term ``balance law'' is applied to a conservation law with a source term.
    \end{rem}

      Definition ~\ref{defnbalance} is closely related to the physical interpretation of systems of conservation laws, in particular the Euler system of compressible fluid flow. Furthermore, the balance law serves as the foundation of numerical finite volume schemes; in fact, every interval $[x_1,x_2]$   is considered as a ``control volume'' in which the balance law is satisfied between arbitrary time levels $t_1<t_2.$

      Theorem ~\ref{thmweakbalance} implies that a weak solution satisfying certain hypotheses (in particular an entropy solution) is a solution to the balance law  in the sense of Definition ~\ref{defnbalance}. It is easy to see that conversely, a solution to the balance law  is a weak solution of the  conservation law ~\eqref{eqconslaw}.

       For notational simplicity we shall occasionally denote by $S(t)$ the  solution operator,
        \be\label{eqdenoteSt}
        u(\cdot,t)=S(t)u_0(\cdot),
        \ee
     even though the solution is not assumed to be unique.

\subsection{\textbf{THE SCALAR CONSERVATION LAW}}\label{subsecscalarlaw}
 We now confine the above discussion to the scalar equation, namely $u\in\RR.$
We  refer to the classical paper ~\cite{kru} and to the books ~\cite{dafermos,evans,GodlewskiRaviart} for the notion of the Kru\v{z}kov \textbf{entropy solution.} Note that in Theorem ~\ref{thmweakbalance} we do not need to assume that $u(x,t)$ is an entropy solution.

In  this case  the function space $\fU^b$ (see ~\eqref{eqdeffU}) is taken as the space $L^\infty(\RR)$ while $\fU^c$ is the space $L^1(\RR)$.

    The norm in $L^p(\RR)$ is denoted by $\|w\|_p.$

      We recall the basic facts concerning this evolution semigroup ~\cite[Chapter 2]{GodlewskiRaviart}:

      \begin{claim}\label{claimStproperty} The solution semigroup $S(t):\fU\hookrightarrow C(\overline{\RR_+},L^1(\RR))$ is continuous and satisfies
      \begin{itemize}
      \item[(i)] $$
      \|S(t)u_0\|_\infty\leq \|u_0\|_\infty, \quad t\geq 0.
      $$
      \item[(ii)]
         $$ \|S(t)u_0-S(t)v_0\|_1\leq \|u_0-v_0\|_1, \quad t\geq 0.
         $$
      \end{itemize}
     \end{claim}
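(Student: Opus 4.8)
The cornerstone of the proof is the \emph{Kru\v{z}kov entropy inequality}: for every constant $k\in\RR$ the entropy solution $u$ satisfies, in the sense of distributions on $\RR\times\Rplus$,
\[
\partial_t |u-k| + \partial_x\big[\mathrm{sgn}(u-k)\,(f(u)-f(k))\big]\le 0.
\]
Everything below is extracted from this single family of inequalities, and the heart of the matter is Kru\v{z}kov's \emph{doubling of variables} device. The plan is to obtain simultaneously a comparison (order-preserving) principle and the $L^1$ contraction (ii), and then to deduce the $L^\infty$ bound (i) as a special case.

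First I would establish the key two-parameter inequality. Given two entropy solutions $u=S(t)u_0$ and $v=S(t)v_0$, I write the Kru\v{z}kov inequality for $u(x,t)$ with the constant $k=v(y,s)$, and symmetrically for $v(y,s)$ with $k=u(x,t)$. Testing each against a nonnegative $\phi(x,t,y,s)\in C^\infty_0$ and adding, the cross terms combine so that, after integrating by parts in all four variables, one obtains an inequality in which the difference $|u-v|$ and the flux $\mathrm{sgn}(u-v)(f(u)-f(v))$ appear. Then I would specialize $\phi$ to a product of a mollifier in $(x-y)$ and in $(t-s)$ times a cutoff concentrating on the diagonal, and pass to the limit as the mollification width tends to zero. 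The outcome is the single-variable distributional inequality
\[
\partial_t |u-v| + \partial_x\big[\mathrm{sgn}(u-v)\,(f(u)-f(v))\big]\le 0
\quad\text{on }\RR\times\Rplus.
\]

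From this, integrating in $x$ over $\RR$ (using $u_0-v_0\in L^1(\RR)$, so that the flux term has vanishing spatial limits) and then in time over $[0,t]$ gives $\|S(t)u_0-S(t)v_0\|_1\le\|u_0-v_0\|_1$, which is (ii); the same inequality, tested against the positive part, yields order-preservation ($u_0\le v_0$ a.e. implies $S(t)u_0\le S(t)v_0$ a.e.). For (i), I would apply order-preservation with the constant data $v_0\equiv M:=\|u_0\|_\infty$ and $v_0\equiv -M$, noting that the constants $\pm M$ are stationary entropy solutions; this sandwiches $-M\le S(t)u_0\le M$, i.e. $\|S(t)u_0\|_\infty\le\|u_0\|_\infty$. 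The continuity of the operator in the data is exactly the Lipschitz estimate (ii), while continuity of $t\mapsto S(t)u_0$ into $L^1(\RR)$ follows by combining (ii) with a standard time-modulus estimate near $t=0$ (alternatively it is the persistence property \eqref{eqpersist} assumed above).

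The main obstacle is the doubling-of-variables step: the bookkeeping of the four-variable integration by parts and, above all, the justification that the diagonal concentration of $\phi$ reproduces precisely the single-variable inequality without leaving uncontrolled remainder terms. This requires the $L^1_{loc}$ continuity of $t\mapsto u(\cdot,t)$ and care with the a.e.-defined sign function on the set $\{u=v\}$. These are exactly the delicate points carried out in \cite[Chapter 2]{GodlewskiRaviart} and in the original work \cite{kru}, to which the present Claim refers.
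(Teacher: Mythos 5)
The paper offers no proof of this Claim: it is recalled as a standard fact of Kru\v{z}kov theory, with a citation to \cite[Chapter 2]{GodlewskiRaviart}. Your sketch --- doubling of variables yielding the Kato inequality for $|u-v|$, integration in space and time for the $L^1$ contraction (ii), and comparison with the stationary constant solutions $\pm\|u_0\|_\infty$ for the $L^\infty$ bound (i) --- is precisely the argument contained in that reference and in \cite{kru}, so it coincides with the proof the paper implicitly invokes (the only point to handle with care, as those references do, is that the constant data $\pm\|u_0\|_\infty$ are not in $L^1(\RR)$, so the comparison must be run on the positive part $(u-v)^+$ or localized by finite propagation speed).
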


      Remark in particular that for any fixed interval $[x_1,x_2]\subseteq\RR$ the mass $m(t)=\int\limits_{x_1}^{x_2} u(x,t)dx$ of the entropy solution $u(x,t)$  is well-defined and, indeed, is a continuous function of $t\in\Rplusc.$

%


%








\section{\textbf{CONSISTENCY OF APPROXIMATE FLUXES}}\label{secconsistency}

 Consider the balance law for systems ~\eqref{eqbalancecons}. In this section we introduce approximate fluxes associated with it. These fluxes serve in the construction of \textbf{compact schemes,}  designed to approximate the solution of the balance law.

\fbox{\parbox[c]{300pt}{Let $k=\Delta t>0.$   As is common in the literature on finite difference methods (for evolution equations) we set $k$ as the \underline{sole} parameter in the study. Thus, convergence of approximate solutions to the exact ones will be studied in terms of limits as $k\to 0.$}}

The spatial step is $h=\Delta x=\lambda^{-1}k,$ where $\lambda>0$ is assumed to be fixed.

\begin{defn}

    The $k-$ spatial grid is the discrete set in $\RR,$
    $$
    \Gamma_k=\set{x_j=jh}_{j=-\infty}^\infty,
    $$
    and the \textbf{grid intervals} (or  \textbf{grid cells}) are the intervals
    $$
    I_j=\Big(x_{j-\frac12},x_{j+\frac12}\Big),\quad -\infty<j<\infty,
    $$
    where
    $$
    x_{j\pm\frac12}=x_j\pm\frac{h}{2}.
    $$

The \textbf{spacetime $k-$ grid} is the discrete set
 \be\label{eqkgrid}
 \GRST=\Gamma_k\times\set{t_n=nk,\,\,n=0,1,2,\ldots}.
 \ee
 \end{defn}

Recall the persistence space $\fU$ as in Equation ~\eqref{eqpersist}. Given a spatial grid $\Gamma_k$ we assume that:
\begin{assume}\label{assumeVk}
 \begin{itemize}
 \item[(i)] There exists a \textbf{functional subspace} $V^k\subseteq \fU$ having the following property:

 The restrictions of the elements of $V^k$  to any grid interval $I_j$ constitute a finite dimensional subspace.
  The dimension of these restrictions is called the \textbf{order of $V^k.$}
 Typically, it is a space of piecewise polynomial functions of fixed degree, with possible discontinuities at the boundary points $\set{x_{j\pm\frac12}}_{j=-\infty}^{j=\infty}$ of every grid interval $I_j.$

     \item[(ii)] There exists a projection $P^k:\fU\to V^k,$
       that does not change averages in grid cells, namely,
    \be\label{eqinvariantave}
    \int_{I_j}P^kv(x)dx=\int_{I_j}v(x)dx,\quad -\infty<j<\infty,\,\,v\in\fU.
         \ee
  \end{itemize}
  \end{assume}

  \begin{rem}\label{remnotationk}
  \begin{enumerate}
   \item[(i)]  The notation of the space $V^k$ (and the projection $P^k$)  refers explicitly only to the variable parameter $k$ (that determines  the spatial step $h=\lambda^{-1}k$). However, this space also depends on our choice of the dimension of its restrictions to grid intervals, such as piecewise-constant (``first order''), piecewise-linear (``second order'') and so on.
   \item[(ii)]

   The operators $P^k$ are sometimes called ``reconstruction operators''. They involve suitable interpolations
   and  ``slope limiters''.
  \end{enumerate}
  \end{rem}

     \textbf{Notation.} Elements of $V^k$ will be designated by Greek letters:  $\xi\in V^k.$
         There will be \underline{no other use} of Greek letters throughout the paper (except for the fixed constant ratio $\lambda=\frac{k}{h}$).

         \subsection{\textbf{APPROXIMATE FLUXES}}
      The  approximate fluxes introduced here are intended to be sufficiently general, so as to cover a wide variety of  finite volume schemes of any order.

 We assume that there exists $\lambda_0>0$ so that for every fixed $\lambda=\frac{ k}{h}<\lambda_0$  and every $0<k<\frac12 T,\,h=\lambda^{-1}k,$ there exists
   \be\label{eqnumerflux}
   \aligned \mbox{  a  sequence of continuous functions} \,\,\set{F^{\xi}_{j+\frac12}(t),\,\,0\leq t<k}_{j=-\infty}^\infty,\\\mbox{for every}\,\, \,\,\xi\in V=V^k.\,\,\hspace{180pt} \\\endaligned
 \ee

  \begin{defn}[\textbf{Approximate Fluxes}]\label{defnnumericflux}

   We say that the functions of the family $\set{F^{\xi}_{j+\frac12}(t),\,\,0\leq t<k}_{j=-\infty}^\infty$ are \textbf{approximate fluxes} (in the time interval $[0,k)$) corresponding to the initial function $\xi\in V^k,$ if the following \textbf{finite propagation property} is satisfied.

  $F^{\xi}_{j+\frac12}(t),\,\,0\leq t<k,$ depends only on the restriction of \, $\xi$ to $I_j\cup I_{j+1}.$

            Furthermore, if \, $\xi\equiv c=const.$ in $I_j\cup I_{j+1}$ then $F^{\xi}_{j+\frac12}(t)\equiv f(c).$
         \end{defn}

         Note that in this definition the grid points $\set{x_{j+\frac12}}_{j=-\infty}^\infty$ satisfy $x_{j+\frac12}-x_{j-\frac12}=h=\lambda^{-1}k.$

         \begin{rem}\label{remCFL}
         The assumption above that $\lambda$ is sufficiently small is the ``CFL condition'' that enables the finite propagation property of the fluxes.
         \end{rem}

              The terminology of ``approximate fluxes''  is suggested by the fact that they are viewed as approximating the flux values $f(u)$ at the nodes $\set{x_{j+\frac12}}_{j=-\infty}^\infty$ in a sense that will be made rigorous below ~\eqref{eqdefnconsist}.
        \begin{rem}[\textbf{Uniformity of the spatial grid}]\label{remunifgrid} While in our treatment the time step $k>0$ is constant over the whole mesh, the spatial grid may be \textit{non uniform}. This is due to the fact that Definition ~\ref{defnnumericflux} relies only on fluxes restricted to cell boundaries. We have chosen to avoid this generality since it leads to notational complications (for example, the underlying discrete spaces $V^k$ consist of piecewise polynomial functions over cells of variable size).
        \end{rem}
         \subsection{\textbf{CONSISTENCY}}
         We now proceed to define the key concept of \textit{consistency.} As explained in the Introduction, the idea of ``\textit{consistency}'' involves a comparison between exact and approximate solutions, over short time intervals. In preparation we need to introduce suitable families of initial data, contained in the spaces $V^k.$ For such initial data we assume the short-time existence of unique solutions to the balance law, as in Assumption ~\ref{assumesoln} below.  We take  $\lambda_0>0$ as in Definition ~\ref{defnnumericflux} and consider spacetime grids satisfying $\lambda=\frac{ k}{h}<\lambda_0.$

         \begin{defn}\label{def:initialcompact} Let $H \subseteq \bigcup\limits_{0<k<\frac12 T}V^k.$ We say that $H$ is an \textbf{admissible set of initial data} if for every $\lambda=\frac{ k}{h}<\lambda_0$ the set $H\cap V^k$ is bounded in the $\fU$  topology and compact in the $L^1_{loc}(\RR)$ topology ~\eqref{eqL1locmetric}.
         \end{defn}

           As an example, we can think of $H$ (in the scalar case) as the set of uniformly  bounded functions having a finite \textit{total variation.}

           \begin{assume}\label{assumesoln}
            Let $\xi(x)\in V^k.$ Then the balance law ~\eqref{eqbalancecons} admits a unique solution in the time interval $t\in [0,k],$  subject to the initial condition $\xi(x).$ The uniqueness is achieved  by imposing suitable constraints, such as ``entropy conditions.'' This solution is denoted henceforth by $u(x,t;\xi)=S(t)\,\xi\in\fU,\,t\in [ 0,k].$
          \end{assume}

           Actually, under some additional boundedness hypotheses, Assumption ~\ref{assumesoln} can be verified ~\cite{Li-Yu}:
           \begin{claim}\label{claimexistentropy} Let $V^k\subseteq \fU$ be of any (finite) order. Then for every $\xi\in V^k$ there exists a unique entropy solution $u(\cdot,t;\xi)=S(t)\,\xi(\cdot),\,\,t\in [ 0,k].$ This solution can be obtained by a constructive procedure, using characteristic curves and generalized Riemann solvers.
           \end{claim}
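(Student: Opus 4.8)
The plan is to work in the scalar framework of Subsection~\ref{subsecscalarlaw}, where $\fU=L^\infty(\RR)\cap L^1(\RR)$ and $f\in C^1$; an element $\xi\in V^k$ is then a piecewise polynomial of some fixed degree, smooth inside each grid cell $I_j$ and with jumps only at the nodes $\set{x_{j+\frac12}}$. For such data the bare \emph{existence and uniqueness} of an entropy solution is classical: since $\xi$ is bounded and integrable, Kru\v{z}kov's theorem \cite{kru}---equivalently the $L^1$-contractive, $L^\infty$-bounded semigroup of Claim~\ref{claimStproperty}---produces a unique $u(\cdot,t;\xi)=S(t)\xi$ for all $t\ge0$, and in particular on $[0,k]$. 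The genuine content of the claim is therefore the \textbf{constructive} description of this solution on the short interval $[0,k]$, which is exactly what the finite volume scheme will later use.

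First I would isolate the finite-propagation estimate that localizes the construction. Put $M=\sup\set{|f'(s)|:|s|\le\|\xi\|_\infty}$; by Claim~\ref{claimStproperty}(i) the solution stays within this range, so every characteristic speed is bounded by $M$ and the domain of dependence of a point $(x,t)$ with $0\le t\le k$ lies in $[x-Mt,x+Mt]$, an interval of width at most $2Mk$. Choosing $\lambda_0$ so that $M\lambda_0<\half$ (recall $h=\lambda^{-1}k$, hence $2Mk<h$ whenever $\lambda<\lambda_0$) forces this interval to meet \emph{at most one} node. This is the CFL condition already flagged in Remark~\ref{remCFL}, and it decouples the evolution near distinct nodes.

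With this at hand the solution decomposes into two kinds of local pieces covering $\RR\times[0,k]$. Where the interval of dependence lies strictly inside one cell $I_j$, the datum is a single polynomial, and I would propagate by characteristics: each characteristic transports a constant value at speed $f'(\xi(x_0))$, and where neighbouring characteristics cross, a shock is inserted according to Rankine--Hugoniot together with the entropy admissibility condition. Where the interval of dependence contains a node $x_{j+\frac12}$, the local problem is a \emph{generalized Riemann problem}: a jump at $x_{j+\frac12}$ flanked by polynomial data on each side. Invoking the generalized Riemann solver \cite{Li-Yu} (see also \cite{BenArtzi-Falcovitz-84}) yields a unique, entropy-admissible wave fan issuing from $(x_{j+\frac12},0)$, again of finite speed. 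Since any two of these local solutions share identical data on the overlap of their domains of dependence, where entropy solutions are unique, they agree there and patch into a single entropy solution on the whole strip $\RR\times[0,k]$; its uniqueness is inherited from Claim~\ref{claimStproperty}.

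The main obstacle is the node analysis: the generalized Riemann problem is not the classical piecewise-constant Riemann problem but its perturbation by polynomial data on both sides, and one must show the emerging wave fan is entropy-admissible, unique, and of finite propagation speed even though its leading shock or rarefaction interacts instantaneously with the smoothly varying characteristic field of the neighbouring cells. Controlling this interaction up to time $k$ is precisely what the constructive theory of \cite{Li-Yu} provides; I would thus reduce the claim to checking its hypotheses---the uniform $L^\infty$ bound above and the CFL restriction $\lambda<\lambda_0$---both of which hold by construction.
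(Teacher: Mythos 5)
Your proposal proves a scalar statement, but the claim is not a scalar one. It sits in Section~\ref{secconsistency}, which opens by considering the balance law \emph{for systems}~\eqref{eqbalancecons}, and its role there is to verify Assumption~\ref{assumesoln} for the finite volume framework that is subsequently applied to systems (GRP and MUSCL fluxes, compressible flow). The paper's own justification is precisely a citation of the local theory of quasilinear hyperbolic \emph{systems} in~\cite{Li-Yu}: for piecewise-smooth (here piecewise polynomial) data, a unique piecewise-smooth solution exists on a short time interval $[0,k]$, constructed by characteristics inside the cells and by generalized Riemann solvers at the nodes. Your architecture --- obtain existence and uniqueness from Kru\v{z}kov's theorem and the contraction semigroup of Claim~\ref{claimStproperty}, and then exhibit the characteristic/GRP construction as an a posteriori description of the already-known solution --- is unavailable for systems: there is no Kru\v{z}kov theory, no $L^1$-contraction, and no global uniqueness class to inherit from. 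For systems the constructive short-time theory is not a description of an independently given solution; it \emph{is} the existence-and-uniqueness statement, and the restriction to $t\in[0,k]$ is what makes it possible. That is the genuine gap.

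Within the scalar setting your sketch is essentially sound, and its ingredients --- the CFL bound $2Mk<h$ ensuring the domain of dependence meets at most one node, characteristics inside cells, generalized Riemann fans at nodes, patching by finite propagation speed and local uniqueness --- are exactly those of the constructive theory; indeed your final paragraph already delegates the hard node analysis to~\cite{Li-Yu}. To repair the proposal, run that local argument directly for systems on $[0,k]$, taking the Li--Yu theorem as the source of both existence and uniqueness within the piecewise-smooth class (this is where the paper's caveat about ``additional boundedness hypotheses'' enters), rather than retreating to the scalar case, where the claim loses most of its content because Kru\v{z}kov's semigroup already settles everything globally in time.
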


         \begin{defn}\label{def:consistalpha} Consider the setup as in Definition ~\ref{defnnumericflux}. Let $q\geq 0.$ The approximate fluxes $\set{F^{\xi}_{j+\frac12}(t),\,\,0\leq t<k}_{j=-\infty}^\infty$ are said to be \textbf{consistent of order $q$} with the balance law ~\eqref{eqbalancecons} if  for every admissible set of initial data $H$ \,\,and all \,\,$\xi\in H\cap V^k,$
   \be\label{eqdefnconsist}
   \aligned\int_{0}^{k}\Big[F^{\xi}_{j+\frac12}(t)-F^{\xi}_{j-\frac12}(t)\Big]dt-
   \int_{0}^{k}\Big[f(u(x_{j+\frac12},t;\xi))-f(u(x_{j-\frac12},t;\xi))\Big]dt\\
   \leq Ck^{2+q},\quad -\infty<j<\infty,\hspace{200 pt}\endaligned
   \ee
   where $C>0$ depends only on $H.$


   \end{defn}

   \begin{rem}\label{remorder2q}
   Observe that the order of consistency in Definition ~\ref{def:consistalpha} depends on the \textit{choice of the space $V^k.$ } This will be illustrated in Example ~\ref{examplegodunov1} below.

   Also, the exponent $2+q$ is related to the exponent $1+q$ in ~\eqref{eqdefineconsistdisc}. This will be further discussed in Corollary ~\ref{corconsistPhik} below. As already noted in Remark ~\ref{remoldconsist1+q} the order of consistency may depend on the regularity of the solution. Refer to Subsubsection ~\ref{subsubGRP} below for a detailed analysis of the interplay between regularity and order of consistency.
   \end{rem}

   \begin{rem}\label{remCorderalpha} Note that the right-hand side in ~\eqref{eqdefnconsist} is assumed to be bounded by $Ck^{2+q},$ where $C>0$ depends on $H$ but is independent of $j.$  Typically this dependence is expressed in terms of norms of the restrictions of $\xi$ to $I_j$ and neighboring grid intervals.

   Of course this can be relaxed by assuming, for instance, that the constant $C>0$ is ``localized'', so that $C=C(A),$ for all $j$ such that $ x_{j\pm\frac12}\in [-A,A].$
   \end{rem}

         \begin{example}[\textbf{The Godunov Approximate Flux}~\cite{Godunov-59}] \label{examplegodunov1} Let $V^k$ be of first order , namely, the space of piecewise constant (in grid intervals) functions. Then (if $\lambda_0$ is sufficiently small by the CFL condition) by definition
       \be\label{eqgodunov1}
       F^{\xi}_{j+\frac12}(t)=f(u(x_{j+\frac12},t;\xi)),\quad 0\leq t< k,\,\,-\infty<j<\infty,
       \ee
       so that, for this space, the approximate flux is consistent to any order.

       Recall that in this case $u(x_{j+\frac12},t;\xi)\equiv const$ is the solution to the \textbf{Riemann problem} subject to the two sided initial data $\xi_j,\xi_{j+1}.$

    \end{example}

    \subsubsection{\textbf{ ORDER OF CONSISTENCY AND REGULARITY}}\label{subsubGRP}
     Suppose now that we try to implement the Godunov approximation for the case of second-order spaces (namely, $V^k$ consists of functions that are linear in grid cells).
The approximate flux is therefore

      \be\label{eqGodunovpiecelinear}
      F^{\xi}_{j+\frac12}(t)=f(u(x_{j+\frac12},0+;\xi)),
      \ee
         where $u(x_{j+\frac12},0+;\xi)$ is the ``instantaneous'' solution to the Riemann problem subject to the two sided initial data $\xi_{j+\frac12-},\xi_{j+\frac12+},\,\,$ the limiting values of the piecewise linear function $\xi(x)$ at $x_{j+\frac12}.$

         Let $u_t(x_{j+\frac12},0+;\xi)$ be the instantaneous value of the time-derivative of the solution (this is actually the solution to the \textbf{Generalized Riemann Problem (GRP)}~\cite{BenArtzi-Falcovitz-2003,MBA-Jiequan-Numer}). From
         \be\label{eqfusecondorder}
         \begin{array}{rl}
       & f(u(x_{j+\frac12},t;\xi))\\[3mm]
        =&  f(u(x_{j+\frac12},0+;\xi))+f'(u(x_{j+\frac12},0+;\xi))u_t(x_{j+\frac12},0+;\xi)t
         +\mathcal{O}(t^2),
         \end{array}
         \ee
 it follows that
         $$
         \begin{array}{rl}
         &\displaystyle \int_{0}^{k}F^{\xi}_{j+\frac12}(t)dt-
   \int_0^k f(u(x_{j+\frac12},t;\xi))dt\\[3mm]
   =&\displaystyle \frac12 f'(u(x_{j+\frac12},0+;\xi))u_t(x_{j+\frac12},0+;\xi)k^2+\mathcal{O}(k^3).
   \end{array}
   $$
  Hence the left-hand side of  ~\eqref{eqdefnconsist} is
   \be\label{eqdiffGRP}\aligned \int_{0}^{k}\Big[F^{\xi}_{j+\frac12}(t)-F^{\xi}_{j-\frac12}(t)\Big]dt-\int_0^k\Big[ f(u(x_{j+\frac12},t;\xi))-f(u(x_{j+\frac12},t;\xi))\Big]dt\\=\frac12 [f'(u(x_{j+\frac12},0+;\xi))u_t(x_{j+\frac12},0+;\xi)-f'(u(x_{j-\frac12},0+;\xi))u_t(x_{j-\frac12},0+;\xi)]k^2\\+\mathcal{O}(k^3).\endaligned
   \ee

         If no regularity of the solution $u(x,t;\xi)$ is assumed (in particular, if it is discontinuous) then the approximate flux is only  consistent of order zero ($q=0$ in ~\eqref{eqdefnconsist}). However, in regions where the solution is smooth the difference
         \be\label{eqdiffaddq} f'(u(x_{j+\frac12},0+;\xi))u_t(x_{j+\frac12},0+;\xi)-f'(u(x_{j-\frac12},0+;\xi))u_t(x_{j-\frac12},0+;\xi)=\mathcal{O}(k),
         \ee
         thus raising the order of consistency to $q=1.$

         In view of ~\eqref{eqfusecondorder} the remedy here is to upgrade the approximate flux ~\eqref{eqGodunovpiecelinear} by adding the GRP solution, thus introducing the GRP fluxes.

  \begin{defn}[{\textbf{GRP Approximate Flux}}]\label{defGRPflux} The GRP approximate flux is given by
        \be\label{equpgradeGodflux}
        F^{\xi}_{j+\frac12}(t)=f(u(x_{j+\frac12},0+;\xi))+f'(u(x_{j+\frac12},0+;\xi))u_t(x_{j+\frac12},0+;\xi)t.
        \ee
        \end{defn}

        Now
         $$
         \int_{0}^{k}F^{\xi}_{j+\frac12}(t)dt-\int_0^k f(u(x_{j+\frac12},t;\xi))dt=\mathcal{O}(k^3),
         $$
         so that the order of consistency is $q=1$ in all cases. For smooth solutions we obtain second-order consistency ($q=2$), since in analogy with ~\eqref{equpgradeGodflux}
         \be\label{eqdiffaddqa} f''(u(x_{j+\frac12},0+;\xi))u_t(x_{j+\frac12},0+;\xi)-f''(u(x_{j-\frac12},0+;\xi))u_t(x_{j-\frac12},0+;\xi)=\mathcal{O}(k).
                \ee
                  Thus, when reduced to the smooth setting, the common statement about the second order  consistency of this approximate flux (as well as the MUSCL flux below) is recovered.

         \begin{example}[{\textbf{MUSCL  Approximate Flux}}]\label{exampleMUSCL} The derivative $f'(u(x_{j+\frac12},0+;\xi))$ in ~\eqref{equpgradeGodflux} depends solely on the Riemann solution. On the other hand, the instantaneous time derivative $u_t(x_{j+\frac12},0+;\xi)$ is obtained from the GRP solution. Suppose that we can somehow find an approximation $ v(x_{j+\frac12},0+;\xi)$ so that
       \be\label{equtvdifference}
       u_t(x_{j+\frac12},0+;\xi)-v(x_{j+\frac12},0+;\xi)=\mathcal{O}(k^\beta),\quad \beta\geq 0.
       \ee
              Let us define new approximate fluxes by
              $$ F^{\xi}_{j+\frac12}(t)=f(u(x_{j+\frac12},0+;\xi))+f'(u(x_{j+\frac12},0+;\xi))v(x_{j+\frac12},0+;\xi)t,
              $$
              so that now
              $$
              \int_{0}^{k}F^{\xi}_{j+\frac12}(t)dt-\int_0^k f(u(x_{j+\frac12},t;\xi))dt=\mathcal{O}(k^3)+\mathcal{O}(k^{2+\beta}).
              $$
              The MUSCL scheme of van-Leer ~\cite{B.vanLeer-79} provides such an approximation with $\beta=1$ ~\cite[Appendix D]{BenArtzi-Falcovitz-2003} and we conclude that it is consistent of order $q=1.$
         \end{example}

            \begin{example}[{\textbf{Acoustic GRP  Approximate Flux}}]\label{exampleacoustic}
               The acoustic GRP flux was introduced in  ~\cite[Proposition 5.9]{BenArtzi-Falcovitz-2003} and serves as a particularly simple extension of the Godunov flux. In fact, it also serves as the foundation of the ADER methodology ~\cite[p.807]{dumbser}.  It is only applicable if no strong discontinuities are present and in this case it has the same order of consistency as the MUSCL flux ~\cite[Theorem 5.36]{BenArtzi-Falcovitz-2003}, namely, $\beta=1$ in ~\eqref{equtvdifference}. However, in the presence of strong discontinuities it is consistent of order $q=0,$ hence does not offer a formal improvement of the Godunov flux. It should be noted that in simulations of problems that do not involve strong discontinuities it actually yields much better approximations than those provided by the Godunov scheme ~\cite{Toro}.
            \end{example}

             It is now clear how to obtain still higher order of consistency ($q=2$ in discontinuous cases): a second-order time derivative is added to the generalized Riemann solution. This has already been implemented in
             the case of the Euler compressible flow ~\cite{qian-li}.

          \subsection{\textbf{CONSISTENCY--COMPARING OLD AND NEW}}
          The approximate fluxes introduced above lead to the construction of approximate solutions by \textit{finite volume schemes.} This construction is introduced here, along with the order of consistency of the ensuing scheme. The compatibility of the new definition of order of consistency with the classical definition (Definition ~\ref{defnconsistfirst}) is established.

           In order to conform with the conventional treatment, we consider the general step of the scheme (namely, from $t_n$ to $t_{n+1}$).

           Assumption ~\ref{assumesoln} is imposed (see also Claim~\ref{claimexistentropy}), guaranteeing the existence of a unique solution $u(x,t;\xi),\,\,\xi\in V^k,$ to the balance law,  in every time step.

    In the following definition we assume that  $\set{F^{\xi}_{j+\frac12}(t),\,\,0\leq t<k}_{j=-\infty}^\infty$ are approximate fluxes consistent with the balance law ~\eqref{eqbalancecons} .

      \begin{defn}\label{defnapproxsoln}\begin{enumerate}
        \item[(i)] Suppose that there is a  map $\widetilde{S(k)}:V^k\to \fU$ so that, for every $\xi\in V^k,$
   \be\label{eqapproxsoln}
   \int_{I_j}\widetilde{S(k)}\xi dx-\int_{I_j}\xi dx=
   -\int_{0}^{k}\Big[F^{\xi}_{j+\frac12}(t)-F^{\xi}_{j-\frac12}(t)\Big]dt,\quad -\infty<j<\infty.
   \ee
     Then \,$\widetilde{S(k)}$ is called \textbf{an approximate evolution operator} to the balance law associated with these fluxes.
   \item[(ii)]  Let  $\set{F^{\xi}_{j+\frac12}(t),\,\,0\leq t<k}_{j=-\infty}^\infty$ be approximate fluxes consistent with the balance law ~\eqref{eqbalancecons}. We say that a family of maps
       $\set{\Phi^k:V^k\to V^k}_{k>0}$ is a \textbf{Finite Volume Scheme (FVS)}
        for the balance law ~\eqref{eqbalancecons} if
          \be\label{eqdecompPhik}
          \Phi^k=P^k\widetilde{S(k)},\ee
           where  $P^k$ is the projection as in ~\eqref{eqinvariantave}.
          \end{enumerate}
          \end{defn}

          Given a time step $k>0,$ we define a sequence $\set{\theta^n}_{n=0}^\infty\subseteq V^k$ as follows.

          First, $\theta^0=P^ku_0(x).$ We construct this sequence successively  by letting first
            \be\label{eqdefnthetan}
            u(x,t-t_n;\theta^n)=S(t-t_n)\theta^n,
            \ee
              and then
          \be\label{eqsuccexact}
          \theta^{n+1}=P^k(u(x,t_{n+1}-t_n;\theta^n)).
          \ee
            The set of cell averages of these functions is defined by
  $$
  \set{\theta^{n+1}_{j}=h^{-1}\int_{I_j}\theta^{n+1}(x)dx}_{j=-\infty}^\infty,\quad n=0,1,2,\ldots
  $$


       \begin{prop}\label{propexact} Assume that the approximate fluxes $\set{F^{\xi}_{j+\frac12}(t),\,\,0\leq t <k}_{j=-\infty}^\infty$ are consistent of order $q,$  in the sense of Definition ~\ref{def:consistalpha}.
     Then the sequence of cell averages over the intervals $I_j$ satisfies
        \be\label{eqtrunc}\aligned
           \theta^{n+1}_{j}  -\theta^{n}_{j}= -\frac{\lambda}{k}\int_{t_n}^{t_{n+1}}
           [F^{\theta^n}_{j+\frac12}(t-t_n)-F^{\theta^n}_{j-\frac12}(t-t_n)]dt+\mathcal{O}(k^{1+q})  ,\\ -\infty<j<\infty.\hspace{30pt}
       \endaligned
       \ee
       \end{prop}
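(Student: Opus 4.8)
The plan is to convert the one-step increment of the cell averages of the constructed sequence into an \emph{exact} flux balance via the balance law, and then invoke consistency to trade the exact fluxes for the approximate ones. First I would use the average-preserving property of the projection, Assumption~\ref{assumeVk}(ii): applying \eqref{eqinvariantave} to $v=u(\cdot,t_{n+1}-t_n;\theta^n)$ and recalling the construction \eqref{eqsuccexact} gives $\int_{I_j}\theta^{n+1}dx=\int_{I_j}u(x,k;\theta^n)dx$, while trivially $\int_{I_j}\theta^n dx=\int_{I_j}u(x,0;\theta^n)dx$. Thus the left-hand side of \eqref{eqtrunc}, multiplied by $h$, equals the one-step increment of the cell mass of the \emph{exact} solution $u(\cdot,t;\theta^n)=S(t)\theta^n$ emanating from $\theta^n$.

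Next I would apply the balance law itself. By Assumption~\ref{assumesoln} (and Claim~\ref{claimexistentropy}) the function $u(\cdot,t;\theta^n)$ is a genuine solution of the balance law on $[0,k]$, so by Theorem~\ref{thmweakbalance} (or directly by Definition~\ref{defnbalance}) the equality \eqref{eqbalancecons} holds \emph{exactly} on the rectangle $I_j\times[0,k]$, with the lateral fluxes evaluated at the cell interfaces $x_{j\pm\frac12}$. This yields the identity, with no truncation error,
$$
h\,\theta^{n+1}_j-h\,\theta^n_j=-\int_0^k\big[f(u(x_{j+\frac12},t;\theta^n))-f(u(x_{j-\frac12},t;\theta^n))\big]dt.
$$
It is worth stressing that the entire error in \eqref{eqtrunc} originates from the fluxes alone: the cell-average dynamics of the projected exact evolution is reproduced exactly by the interface integrals of $f(u)$.

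Finally I would invoke consistency. Taking $\xi=\theta^n$ in Definition~\ref{def:consistalpha}, the estimate \eqref{eqdefnconsist} replaces the exact interface-flux integral above by $\int_0^k[F^{\theta^n}_{j+\frac12}(t)-F^{\theta^n}_{j-\frac12}(t)]dt$ at the cost of an $\mathcal{O}(k^{2+q})$ error. Dividing through by $h=\lambda^{-1}k$ and using that $\lambda$ is fixed, the factor $h^{-1}=\lambda/k$ turns $\mathcal{O}(k^{2+q})$ into $\lambda\,\mathcal{O}(k^{1+q})=\mathcal{O}(k^{1+q})$; a change of variables $t\mapsto t-t_n$ then recasts the flux integral over $[t_n,t_{n+1}]$ exactly as written in \eqref{eqtrunc}.

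The computation is essentially bookkeeping, and no single step is analytically deep; the point that requires care is the \emph{uniformity in} $j$ of the consistency error. To assert that the remainder is a genuine $\mathcal{O}(k^{1+q})$ (rather than $j$-dependent) I must ensure that the constant $C$ in \eqref{eqdefnconsist} is independent of $j$ when $\xi=\theta^n$; this is guaranteed provided $\theta^n$ lies in a fixed admissible set $H$ (Definition~\ref{def:initialcompact}), so that Remark~\ref{remCorderalpha} applies. This is precisely where the boundedness of the iterates $\{\theta^n\}$, to be supplied by the hypotheses of the convergence theorem, enters, and it is the only substantive hypothesis beyond the elementary identities above.
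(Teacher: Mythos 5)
Your proposal is correct and follows essentially the same route as the paper's proof: apply the exact balance law \eqref{eqbalancecons} to $u(\cdot,t;\theta^n)=S(t)\theta^n$ on $I_j\times[0,k]$, use the average-preserving property \eqref{eqinvariantave} of $P^k$ to identify the cell averages of $\theta^{n+1}$ with those of the exact evolution, and then invoke consistency \eqref{eqdefnconsist} with $\xi=\theta^n$ to trade exact for approximate fluxes, the division by $h=\lambda^{-1}k$ converting $\mathcal{O}(k^{2+q})$ into $\mathcal{O}(k^{1+q})$. Your closing observation about the uniformity in $j$ of the consistency constant (requiring $\theta^n$ to lie in an admissible set $H$) is a point the paper's proof passes over silently, and it is a legitimate refinement rather than a deviation.
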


       \begin{proof}
    In view of ~\eqref{eqbalancecons}
   \be\aligned
   \int_{I_j}u(x,t_{n+1}-t_n;\theta^n)dx-\int_{I_j}\theta^n(x)dx\hspace{50pt}\\=
 -\int_{t_n}^{t_{n+1}}\Big[f(u(x_{j+\frac12},t-t_n;\theta^n))
  -f(u(x_{j-\frac12},t-t_n;\theta^n))\Big]dt,\\ \quad -\infty<j<\infty.\hspace{30pt}
  \endaligned
  \ee
   Since the projection $P^k$ does not change the averages (see ~\eqref{eqinvariantave}) it follows that
   \be\label{eqbalanceexact}\aligned \theta^{n+1}_{j}  -\theta^{n}_{j}= h^{-1}\Big[\int_{I_j}\theta^{n+1}(x)dx-\int_{I_j}\theta^{n}(x)dx\Big]\hspace{50pt} \\
   =-\frac{\lambda}{k}
   \int_{t_n}^{t_{n+1}}\Big[f(u(x_{j+\frac12},t-t_n;\theta^{n}))
   -f(u(x_{j-\frac12},t-t_n;\theta^{n}))\Big]dt,\\ \quad -\infty<j<\infty,\endaligned
   \ee
   where we have used $k=\lambda h.$

   The approximate fluxes $\set{F^{\xi}_{j+\frac12}(t)}$ are consistent of order $q,$ so by ~\eqref{eqdefnconsist} the right-hand side of ~\eqref{eqbalanceexact} satisfies
   \be\label{eqFthetaftheta}\aligned
   -\frac{\lambda}{k}\int_{t_n}^{t_{n+1}}\Big[f(u(x_{j+\frac12},t-t_n;\theta^{n}))
   -f(u(x_{j-\frac12},t-t_n;\theta^{n}))\Big]dt\\
   =-\frac{\lambda}{k}\int_{t_n}^{t_{n+1}}\Big[F^{\theta^{n}}_{j+\frac12}(t-t_n)
   -F^{\theta^{n}}_{j-\frac12}(t-t_n)\Big]dt
   +\mathcal{O}(k^{1+q}),\hspace{25pt}\endaligned
   \ee
   which proves ~\eqref{eqtrunc}.
\end{proof}

   \begin{prop}\label{propconsistaverage} Assume that the approximate fluxes $\set{F^{\xi}_{j+\frac12}(t),\,\,0\leq t <k}_{j=-\infty}^\infty$ are consistent of order $q$ and let $\Phi^k$ be a FVS as in Definition ~\ref{defnapproxsoln}. Let $\theta^n\in V^k$ and \be\label{eqwidetiltheta}\widetilde{\theta^{n+1}}(x)=\Phi^k(\theta^n)\in V^k.\ee
        Let
        $$
        \set{\widetilde{\theta^{n+1}_{j}}=
        h^{-1}\int_{I_j}\widetilde{\theta^{n+1}}(x)dx}_{j=-\infty}^\infty.
        $$
        Then
        \be\label{eqconsistUtildeU}
        |\widetilde{\theta^{n+1}_j}-\theta^{n+1}_j|=\mathcal{O}(k^{1+q}),\quad -\infty<j<\infty.
        \ee
        where the averages $\theta^{n+1}_j$ are as in Proposition ~\ref{propexact}.
   \end{prop}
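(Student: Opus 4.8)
The plan is to unwind the definitions until both $\widetilde{\theta^{n+1}_j}$ and $\theta^{n+1}_j$ are expressed through the \emph{same} flux integral, and then to read off the discrepancy from Proposition~\ref{propexact}. First I would use the decomposition $\Phi^k=P^k\widetilde{S(k)}$ from \eqref{eqdecompPhik}. Since $\widetilde{\theta^{n+1}}=\Phi^k(\theta^n)=P^k\big(\widetilde{S(k)}\theta^n\big)$, and the projection $P^k$ leaves cell averages unchanged by \eqref{eqinvariantave}, the average of $\widetilde{\theta^{n+1}}$ over $I_j$ coincides with that of $\widetilde{S(k)}\theta^n$:
$$\widetilde{\theta^{n+1}_j}=h^{-1}\int_{I_j}\widetilde{S(k)}\theta^n\,dx.$$

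Next I would insert the defining relation \eqref{eqapproxsoln} of the approximate evolution operator with $\xi=\theta^n$, which expresses $\int_{I_j}\widetilde{S(k)}\theta^n\,dx$ as $\int_{I_j}\theta^n\,dx$ minus the flux integral $\int_0^k\big[F^{\theta^n}_{j+\frac12}(t)-F^{\theta^n}_{j-\frac12}(t)\big]\,dt$. Recognizing $h^{-1}\int_{I_j}\theta^n\,dx=\theta^n_j$, recalling $h^{-1}=\lambda/k$, and performing the translation $t\mapsto t-t_n$ in the time integral, this becomes
$$\widetilde{\theta^{n+1}_j}=\theta^n_j-\frac{\lambda}{k}\int_{t_n}^{t_{n+1}}\big[F^{\theta^n}_{j+\frac12}(t-t_n)-F^{\theta^n}_{j-\frac12}(t-t_n)\big]\,dt.$$
This is exactly the right-hand side appearing in \eqref{eqtrunc}, \emph{without} the $\mathcal{O}(k^{1+q})$ remainder.

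Finally I would subtract the two identities. Equation \eqref{eqtrunc} of Proposition~\ref{propexact} states that $\theta^{n+1}_j$ equals the very same expression $\theta^n_j-\frac{\lambda}{k}\int_{t_n}^{t_{n+1}}[\,\cdots\,]\,dt$ plus an error $\mathcal{O}(k^{1+q})$; subtracting, the flux integrals cancel identically and leave $\widetilde{\theta^{n+1}_j}-\theta^{n+1}_j=\mathcal{O}(k^{1+q})$, the asserted estimate, with the constant inherited from the consistency bound \eqref{eqdefnconsist} (depending only on the admissible set $H$ to which $\theta^n$ belongs). There is no genuine analytic obstacle: the result is a bookkeeping identity expressing that the FVS reproduces the approximate-flux update on cell averages, while the projected exact evolution reproduces the same update up to the consistency error. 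The only point demanding care is the alignment of the two time parametrizations --- the fluxes in \eqref{eqapproxsoln} are indexed on $[0,k)$ whereas those in \eqref{eqtrunc} are evaluated at $t-t_n$ on $[t_n,t_{n+1}]$ --- which is resolved by the translation invariance built into the single-step construction \eqref{eqdefnthetan}.
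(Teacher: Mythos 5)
Your proposal is correct and is essentially the paper's own proof: both use the average-preserving property \eqref{eqinvariantave} of $P^k$ together with the defining relation \eqref{eqapproxsoln} to write $\widetilde{\theta^{n+1}_j}-\theta^n_j$ as $-\frac{\lambda}{k}\int_{t_n}^{t_{n+1}}\big[F^{\theta^n}_{j+\frac12}(t-t_n)-F^{\theta^n}_{j-\frac12}(t-t_n)\big]dt$, and then subtract this identity from \eqref{eqtrunc} of Proposition~\ref{propexact} so that the flux integrals cancel and only the $\mathcal{O}(k^{1+q})$ remainder survives. Your write-up merely makes explicit some steps (the decomposition $\Phi^k=P^k\widetilde{S(k)}$ and the time translation) that the paper leaves implicit.
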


   \begin{proof}  By ~\eqref{eqapproxsoln} (and the fact that $P^k$ does not change averages) we get
      \be\label{eqapproxsolnunk}\aligned
   \int_{I_j}\Phi^k(\theta^n)dx-\int_{I_j}\theta^ndx=
   -\int_{t_n}^{t_{n+1}}\Big[F^{\theta^n}_{j+\frac12}(t-t_n)-
   F^{\theta^n}_{j-\frac12}(t-t_n)\Big]dt,\\ \quad -\infty<j<\infty.\hspace{20pt} \endaligned
   \ee
   Thus
   \be\label{eqapproxsolnunk1}\widetilde{\theta^{n+1}_j}-\theta^{n}_j=
   -\frac{\lambda}{k}
   \int_{t_n}^{t_{n+1}}\Big[F^{\theta^{n}}_{j+\frac12}(t-t_n)
   -F^{\theta^{n}}_{j-\frac12}(t-t_n)\Big]dt,\quad -\infty<j<\infty.
   \ee
   Comparing this equality with ~\eqref{eqtrunc} we obtain ~\eqref{eqconsistUtildeU}.
   \end{proof}

        We can now compare the consistency result of Proposition ~\ref{propconsistaverage} to the classical consistency definition as recalled in the Introduction (Definition ~\ref{defnconsistfulldisc}).

        Define a discrete time evolution, with $\Delta t=k,$ by

        $$
        \Big[\Phi_{\Delta t,\Delta x}\widetilde{\theta^{n}}\Big]_{j}=\widetilde{\theta^{n+1}_j}-
        \widetilde{\theta^{n}_j},
        \quad -\infty<j<\infty,\quad n=0,1,2,\ldots.
        $$

        \begin{cor}\label{corconsistPhik} Under the assumptions of Proposition ~\ref{propconsistaverage} the discrete operator $\Phi_{\Delta t,\Delta x}$ is of order $q$ in the sense of
        Definition ~\ref{defnconsistfulldisc}. More explicitly, when viewed as acting on the sequence of averages of the \textit{exact} solution, it satisfies Equation ~\eqref{eqdefineconsistdisc}.
        \end{cor}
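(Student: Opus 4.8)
The plan is to recognize that the asserted one-step truncation estimate \eqref{eqdefineconsistdisc} is a direct repackaging of Propositions \ref{propexact} and \ref{propconsistaverage}, once the symbol $u^n_{disc}$ is given its correct meaning in the finite volume setting. Since the discrete unknowns here are cell averages rather than point values, I would reinterpret the sequence $u^n_{disc}=\set{u(x_j,t_n)}$ of Definition \ref{defnconsistfulldisc} as the sequence of cell averages $\set{\theta^n_j}$ generated by the exact evolution-with-reprojection scheme \eqref{eqdefnthetan}--\eqref{eqsuccexact}; that is, I set $u^n_{disc,j}=\theta^n_j$ and $u^{n+1}_{disc,j}=\theta^{n+1}_j$, both issuing from the same datum $\theta^n\in V^k$. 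This is exactly the classical convention for a local truncation error, in which one step of the scheme is compared against one step of the exact flow starting from common data.

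With this identification the first step is to evaluate the discrete operator on the exact averages. By its definition $\big[\Phi_{\Delta t,\Delta x}\theta^n\big]_j=\widetilde{\theta^{n+1}_j}-\theta^n_j$, where $\widetilde{\theta^{n+1}}=\Phi^k(\theta^n)$; combining the defining relation \eqref{eqapproxsoln} of the approximate evolution operator with the average-preserving property \eqref{eqinvariantave} of $P^k$ yields the explicit form already recorded in \eqref{eqapproxsolnunk1}, namely $\big[\Phi_{\Delta t,\Delta x}\theta^n\big]_j=-\frac{\lambda}{k}\int_{t_n}^{t_{n+1}}\big[F^{\theta^n}_{j+\frac12}(t-t_n)-F^{\theta^n}_{j-\frac12}(t-t_n)\big]dt$. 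The second step is then to assemble the left-hand side of \eqref{eqdefineconsistdisc} componentwise and watch the flux integrals cancel against the corresponding terms appearing in Proposition \ref{propexact}:
\[
u^{n+1}_{disc,j}-\Big[u^n_{disc,j}+\big(\Phi_{\Delta t,\Delta x}u^n_{disc}\big)_j\Big]=\theta^{n+1}_j-\theta^n_j-\big(\widetilde{\theta^{n+1}_j}-\theta^n_j\big)=\theta^{n+1}_j-\widetilde{\theta^{n+1}_j}.
\]
Proposition \ref{propconsistaverage} asserts precisely that $|\theta^{n+1}_j-\widetilde{\theta^{n+1}_j}|=\mathcal{O}(k^{1+q})$, uniformly in $j$, which is the required bound \eqref{eqdefineconsistdisc} with $\Delt=k$. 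Thus the corollary follows with no further computation.

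The one point deserving emphasis — the conceptual content rather than a genuine obstacle — is the bookkeeping of powers of $k$. The flux-level consistency \eqref{eqdefnconsist} is posed at order $k^{2+q}$, whereas the classical truncation error \eqref{eqdefineconsistdisc} is demanded at order $k^{1+q}$. The missing power is supplied by the averaging normalization: passing from the flux balance to cell averages multiplies each flux integral by $h^{-1}=\lambda/k$, so that an $\mathcal{O}(k^{2+q})$ flux discrepancy becomes an $\mathcal{O}(k^{1+q})$ discrepancy in the averaged quantities — exactly the accounting already performed in Proposition \ref{propexact}. This reconciles the exponent $2+q$ of Definition \ref{def:consistalpha} with the exponent $1+q$ of Definition \ref{defnconsistfulldisc}, as promised in Remark \ref{remorder2q}, and I expect the write-up to consist essentially of the identification in the first paragraph plus the two-line cancellation above.
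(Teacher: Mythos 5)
Your proposal is correct and follows essentially the same route as the paper: identify the classical discrete data $u^n_{disc}$ with the exact cell averages $\theta^n_j$, note that $(\Phi_{\Delta t,\Delta x}\theta^n)_j=\widetilde{\theta^{n+1}_j}-\theta^n_j$ via \eqref{eqapproxsolnunk1}, and observe that the truncation error collapses to $\theta^{n+1}_j-\widetilde{\theta^{n+1}_j}=\mathcal{O}(k^{1+q})$ by Proposition \ref{propconsistaverage}. Your closing remark on the exponent bookkeeping ($2+q$ versus $1+q$ via the factor $h^{-1}=\lambda/k$) is exactly the accounting the paper performs inside Proposition \ref{propexact} and flags in Remark \ref{remorder2q}, so nothing is missing.
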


        \begin{proof}
        It is assumed that the discrete operator acts on the exact solution, namely,  $\widetilde{\theta^{n}_j}=\theta^{n}_j.$ Hence Equation ~\eqref{eqconsistUtildeU} can be rewritten as
         $$
         \Big|\theta^{n+1}_j-[\theta^{n}_j+(\Phi_{\Delta t,\Delta x}\theta^{n})_{j}]\Big|=\mathcal{O}(k^{1+q}),\quad -\infty<j<\infty.
         $$
         This is therefore identical to Equation ~\eqref{eqdefineconsistdisc}.
         \end{proof}

         Thus our definition, while suitable for discontinuous solutions, is in line with the classical definition, when the latter is applicable.









%



%




      \section{\textbf{CONVERGENCE--THE LAX-WENDROFF THEOREM REVISITED}}\label{seclaxwendroff}


          The question of the convergence of  the approximate solutions to a solution of the balance law are discussed in this section.

        Our goal is to impose conditions on the FVS (Definition ~\ref{defnapproxsoln}) that will guarantee the convergence of the approximate solutions to a solution of the balance law (Definition ~\ref{defnbalance}) at a fixed time $t=T$  as $k\to 0.$ Observe that since we are dealing with \textit{systems} and do not assume any entropy condition, we cannot infer that such a solution to the balance law is unique.

        The consistency result of Proposition ~\ref{propconsistaverage} does not imply such convergence. In fact, it deals with the action, over one time step, of the discrete operator on the \textit{exact solution}. In the construction of the approximate solution at time $t_{n+1},$ on the other hand, the operator acts on the \textit{approximate solution} obtained at time $t_n.$   It is given by (see Definition ~\ref{defnapproxsoln})
        \be\label{eqdefapproxsols}
        \widetilde{\theta^{n+1}}(x)=\Phi^k(\widetilde{\theta^n})\in V^k,\quad n=0,1,2,\ldots
        \ee
Thus, the procedure produces errors that accrue at each time step  and do not necessarily vanish at the final time $t=T$ as the time step is refined.

           The above discussion can simply be summarized by saying that both \textit{consistency} and \textit{stability} are needed in order to ensure convergence. In the \textit{linear} case, this is precisely the claim of the celebrated ``Lax equivalence theorem''~\cite{richtmyer}.

           At this stage, it is useful to recall the two main approaches to convergence.
           \begin{itemize}\item ``compactness''--establishing the boundedness of the discrete solutions in a stronger space that is compactly embedded in the expected convergence space. In the case of discontinuous solutions this is universally carried out in total variation spaces.
           \item ``stability''--imposing some boundedness assumptions on the discrete solutions and using consistency in order to control the accumulation of errors.
           \end{itemize}
             The second approach is what can be referred to as the ``Lax-Wendroff methodology''. It necessarily assumes the existence of an exact solution but, on the other hand, the assumptions imposed on the discrete solutions are typically easier to verify in a concrete computation.

              Our study here is in the framework of the second category.

           We remark that in the case of \textit{a linear evolution equation} (even in Banach space) stability (with a suitable assumption on the action of the discrete operator on the residual terms) is sufficient to establish convergence ~\cite{despres}. See also Remark ~\ref{rem:gallouet-consist}.  However there is no similar result that is applicable to the case of interest here, namely, nonlinear hyperbolic balance laws.

           As already mentioned in the Introduction, the special consistency condition ~\eqref{eqconsistlw}, in the context of hyperbolic conservation laws was used in establishing the Lax-Wendroff convergence theorem ~\cite{lax-wendroff}. However, this consistency condition cannot be used in our context of higher order finite volume schemes and was replaced by another notion of consistency (Definition ~\ref{def:consistalpha}).

              Our aim here is to prove that the approximate solutions constructed in ~\eqref{eqdefapproxsols} converge to a solution of the balance law, under certain conditions. The concept of consistency, as developed here, plays a fundamental role in the proof.

    \subsection{\textbf{THE CONVERGENCE THEOREM }}\label{subsecconverge} Fix $T>0.$ Recall  the construction of the discrete (in time) sequence of short time exact solutions
        (see  Claim ~\ref{claimexistentropy} and ~\eqref{eqsuccexact})
                    \be\label{eqdiscbaseexact}\aligned
                    \theta^{n+1}(x)=P^k(u(x,t_{n+1}-t_n;\theta^n))\in V^k, \hspace{80pt}  \\ \quad n=0,1,2,\ldots,N-1,\,\,N=N(k)=k^{-1}T,
                    \endaligned
                    \ee
                    and the sequence of approximate solutions ~\eqref{eqdefapproxsols}
                    \be\label{eqdiscbaseFVS}
                    \widetilde{\theta^{n+1}}(x)=\Phi^k(\widetilde{\theta^n})\in V^k,\quad n=0,1,2,\ldots,N-1.
                    \ee
  For both sequences the initial data is given by taking the projection of the initial function $u_0\in\fU$ on the subspace $V^k$
 \be\label{eqtheta0}
 \theta^0=\widetilde{\theta^0}=P^ku_0\in V^k.
 \ee

                    We assume that the conditions of Definition ~\ref{defnnumericflux} (and in particular the CFL condition) are satisfied, so that approximate fluxes can be constructed. We shall further assume that these fluxes are consistent of order $q>0$ (Definition ~\ref{def:consistalpha}).
         It follows from ~\eqref{eqbalanceexact} that for all grid intervals $I_j,$
         \be\label{eqscheme}\aligned
         \int_{I_j}\theta^{n+1}(x)dx-\int_{I_j}\theta^{n}(x)dx\hspace{125pt}\\
   =
   -\int_{t_n}^{t_{n+1}}\Big[f(u(x_{j+\frac12},t-t_n;\theta^{n}))
   -f(u(x_{j-\frac12},t-t_n;\theta^{n}))\Big]dt,\\ \quad -\infty<j<\infty,\endaligned
   \ee
and from ~\eqref{eqapproxsoln} that for all grid intervals $I_j,$










\be\label{eqscheme1}\aligned
       \int_{I_j}[\widetilde{\theta^{n+1}}(x)-\widetilde{\theta^{n}}(x)]dx\hspace{150pt}
       \\=-\int_{t_n}^{t_{n+1}}[F^{\widetilde{\theta^{n}}}_{j+\frac12}(t-t_n)-
       F^{\widetilde{\theta^{n}}}_{j-\frac12}(t-t_n)]dt,
       \,\,-\infty<j<\infty.
       \endaligned
       \ee


%


%







       We construct a function $\widetilde{\Upsilon^k}(x,t)$ as follows.
      \be\label{eqdefnvkxt}
      \aligned
      \widetilde{\Upsilon^k}(x,t)=\frac{1}{k}[(t_{n+1}-t)\widetilde{\theta^n}(x)+(t-t_{n})\widetilde{\theta^{n+1}}(x)],\quad t\in [t_{n},t_{n+1}],\,\,\,\\ n=0,1,\ldots,N-1.
      \endaligned
      \ee
         Observe that $t_n=nk$ depends on $k.$

          Instead of the classical Lax-Wendroff theorem we get here the following theorem.



%








\begin{thm}\label{thmconverge} Assume that the FVS ~\eqref{eqdiscbaseFVS} is consistent of order $q>0.$ Let $\set{k_m\downarrow 0}$ be a decreasing sequence of time steps. Let $u_0\in \fU$ (see ~\eqref{eqdeffU})  and let $\set{\widetilde{\Upsilon^{k_m}}(x,t)}_{m=1}^\infty$ be the corresponding functions defined in
          ~\eqref{eqdefnvkxt}.

          Suppose that

          \begin{enumerate}
          \item[(i)] The sequence $\set{\widetilde{\Upsilon^{k_m}}(x,t)}_{m=1}^\infty$ is uniformly bounded in $  L^\infty([0,T],L^\infty(\RR)). $
              \item[(ii)] The sequence $\set{\widetilde{\Upsilon^{k_m}}(x,t)}_{m=1}^\infty$ converges in $C([0,T],L^1_{loc}(\RR))$ to a function $v(x,t)$ (in particular it is uniformly bounded in this space).
          \end{enumerate}
            Then $v(x,t)$ is a  solution of the balance law ~\eqref{eqbalancecons} in  $\RR\times [0,T].$
          \end{thm}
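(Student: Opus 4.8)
The plan is to pass to the limit in the exact discrete balance relation \eqref{eqscheme1}, summed over a space-time rectangle. Fix $Q=[x_1,x_2]\times[t_1,t_2]\subseteq\RR\times\Rplusc$ and, for each $k=k_m$, choose indices $j_1\le j_2$ and $n_1\le n_2$ with $x_{j_1-\frac12}\to x_1,\ x_{j_2+\frac12}\to x_2,\ t_{n_1}\to t_1,\ t_{n_2}\to t_2$ as $m\to\infty$. Summing \eqref{eqscheme1} over $j_1\le j\le j_2$ (the right-hand side telescopes in space) and then over $n_1\le n\le n_2-1$ (the left-hand side telescopes in time) gives the exact discrete rectangle balance
$$\int_{x_{j_1-\frac12}}^{x_{j_2+\frac12}}[\widetilde{\theta^{n_2}}(x)-\widetilde{\theta^{n_1}}(x)]\,dx=-\sum_{n=n_1}^{n_2-1}\int_{t_n}^{t_{n+1}}\big[F^{\widetilde{\theta^n}}_{j_2+\frac12}(t-t_n)-F^{\widetilde{\theta^n}}_{j_1-\frac12}(t-t_n)\big]\,dt.$$
I would show each side converges to the corresponding side of \eqref{eqbalancecons} for $v$; the mass (left) side is routine, the flux (right) side is the crux. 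Since the aim is to verify Definition \ref{defnbalance}, I would establish the balance relation and then recover the remaining regularity from Theorem \ref{thmweakbalance} (see the last paragraph).

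The first substantial step is to replace the approximate fluxes $F^{\widetilde{\theta^n}}_{j\pm\frac12}$ by the exact fluxes $f(u(x_{j\pm\frac12},\cdot\,;\widetilde{\theta^n}))$, and this is where $q>0$ is decisive. For fixed $n$, the consistency estimate \eqref{eqdefnconsist} bounds the per-cell discrepancy of the time-integrated flux differences by $Ck^{2+q}$; telescoping over the $j_2-j_1+1\approx(x_2-x_1)\lambda k^{-1}$ cells, the combined discrepancy of the two boundary terms at $x_{j_1-\frac12}$ and $x_{j_2+\frac12}$ is $O(k^{1+q})$ per step. Summing over the $\approx(t_2-t_1)/k$ steps costs a further $k^{-1}$, so the total error from this replacement is $O(k^{q})$, which vanishes precisely because $q>0$. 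For this to be legitimate the constant $C$ must be uniform over all cells and steps; I would secure this by noting that (i) and (ii) force the whole family $\{\widetilde{\theta^n}\}_{n,m}$ to be bounded in $\fU$ and relatively compact in $L^1_{loc}(\RR)$, i.e.\ to lie in a fixed admissible set $H$ (Definition \ref{def:initialcompact}), so the single constant $C=C(H)$ applies throughout.

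For the mass side, writing $\widetilde{\theta^{n_i}}(x)=\widetilde{\Upsilon^{k_m}}(x,t_{n_i})$ and using the uniform convergence $\widetilde{\Upsilon^{k_m}}\to v$ in $C([0,T],L^1_{loc}(\RR))$ together with $t_{n_i}\to t_i$ and $x_{j_i\pm\frac12}\to x_i$, the left-hand side tends to $\int_{x_1}^{x_2}[v(x,t_2)-v(x,t_1)]\,dx$ (the moving limits of integration contribute negligibly by the $L^\infty$ bound (i)). The genuine difficulty, and the \emph{main obstacle}, is the convergence of the exact-flux sum $\sum_n\int_{t_n}^{t_{n+1}}f(u(x_{j_2+\frac12},t-t_n;\widetilde{\theta^n}))\,dt$ to $\int_{t_1}^{t_2}f(v(x_2,t))\,dt$ (and likewise at $x_1$). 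Two issues arise: the fluxes are evaluated at a single spatial node (pointwise traces do not pass to an $L^1_{loc}$ limit), and each summand uses the short-time solution \emph{restarted} from $\widetilde{\theta^n}$ rather than the global limit. For the first I would invoke Theorem \ref{thmweakbalance}(i): the map $x\mapsto\int_0^k f(u(x,s;\widetilde{\theta^n}))\,ds$ is Lipschitz with constant $\le 2C_Qk$, where $C_Q$ is uniformly bounded by (i) and the persistence property \eqref{eqpersist}; hence replacing the nodal value by the average over the adjacent cell $I_{j_2}$ costs $O(k^2)$ per step, i.e.\ $O(k)$ in total. For the second, over a single step the exact evolution $u(\cdot,s;\widetilde{\theta^n})$ stays $L^1_{loc}$-close to its datum $\widetilde{\theta^n}=\widetilde{\Upsilon^{k_m}}(\cdot,t_n)$ (by the time modulus of continuity in \eqref{eqpersist} and finite propagation), while $\widetilde{\theta^n}\to v(\cdot,t_n)\to v(\cdot,t)$; since $f$ is Lipschitz on the relevant bounded set, the space-time cell averages of $f(u(\cdot,\cdot;\widetilde{\theta^n}))$ converge to those of $f(v)$.

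Collecting these reductions, the exact-flux sum at $x_{j_2+\frac12}$ converges to $h^{-1}\int_{t_1}^{t_2}\int_{x_2-h/2}^{x_2+h/2}f(v(x,t))\,dx\,dt$, which equals $\int_{t_1}^{t_2}f(v(x_2,t))\,dt$ at every Lebesgue point $x_2$ of the (bounded, hence locally integrable) function $x\mapsto\int_{t_1}^{t_2}f(v(x,t))\,dt$. This yields \eqref{eqbalancecons} for $v$ for almost every rectangle. To upgrade to all rectangles and verify the remaining clauses of Definition \ref{defnbalance} I would argue as follows: mass continuity in $t$ is immediate from $v\in C([0,T],L^1_{loc})$ by (ii); the a.e.\ balance relation shows $v$ is a weak solution of \eqref{eqconslaw}; and since $v$ is locally bounded by (i) with $t$-continuous mass, Theorem \ref{thmweakbalance} then gives that $x\mapsto\int_{t_1}^{t_2}f(v(x,t))\,dt$ is locally Lipschitz (hence continuous) and that \eqref{eqbalancecons} holds on \emph{every} rectangle. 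This removes the apparent circularity and identifies $v$ as a solution of the balance law. The consistency bookkeeping, though it is where $q>0$ enters, is comparatively routine; the reconciliation of pointwise nodal fluxes of the restarted short-time solutions with the time-integrated flux of the global limit is the step demanding the most care.
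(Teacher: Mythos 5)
Your overall skeleton matches the paper's proof: sum the discrete balance relation \eqref{eqscheme1} over space and time, use consistency of order $q>0$ (with the uniform constant secured by admissibility of the family of discrete solutions, exactly as the paper does) to trade the approximate fluxes for the exact ones at total cost $O(k^q)$, pass to the limit on the mass side via hypothesis (ii), and handle the flux side through an $L^1$/Lebesgue-point argument. Your error bookkeeping ($Ck^{2+q}$ per cell per step, times $O(k^{-1})$ cells, times $O(k^{-1})$ steps) is identical to the passage from \eqref{eqreplaceFf} to \eqref{eqscheme3} and \eqref{eqscheme4}. Your final upgrade (balance on a.e.\ rectangle $\Rightarrow$ weak solution $\Rightarrow$ Theorem \ref{thmweakbalance} $\Rightarrow$ balance on every rectangle) differs in route from the paper's, which derives the Lipschitz bound $|g(d)-g(c)|\le C|d-c|$ directly from the limiting relation and extends by continuity; either route is acceptable for that last step.

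However, there is a genuine gap at precisely the step you flag as the crux. You claim that $x\mapsto\int_0^k f(u(x,s;\widetilde{\theta^n}))\,ds$ is Lipschitz with constant $\le 2C_Qk$, hence that replacing the nodal flux integral by its average over the adjacent cell costs $O(k^2)$ per step and $O(k)$ in total. Theorem \ref{thmweakbalance} gives no factor of $k$: its proof shows $|g(x_2)-g(x_1)|\le 2C_Q|x_2-x_1|$, because the difference of time-integrated fluxes equals the mass change $\int_{x_1}^{x_2}[u(z,k)-u(z,0)]\,dz$, which is bounded by $2C_Q|x_2-x_1|$ no matter how short the time interval is (your claimed bound is also dimensionally inconsistent). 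The correct per-step replacement cost is therefore $O(h)=O(k)$, and summed over the $O(k^{-1})$ time steps this is $O(1)$: it does not vanish. This is not an artifact of a lossy estimate. For a Riemann shock sitting at, or moving slowly near, the node $x_{j_2+\frac12}$, the time-integrated flux at the node and its average over $I_{j_2}$ differ by $\Theta(k)$ at every step with a coherent sign, so the accumulated discrepancy really is $\Theta(1)$; your $O(k^2)$ would be valid for smooth solutions, and the mistake is exactly the kind of smooth-case reasoning applied to discontinuous solutions that this paper is at pains to avoid. The paper circumvents the problem by never making per-step nodal replacements: it keeps the full sums $g_m(x)=\sum_l\int_{t_l}^{t_{l+1}}f\big([S(t-t_l)\widetilde{\Upsilon^{k_m}}(\cdot,t_l)](x)\big)\,dt$ as functions of the node position, asserts $g_m\to g$ in $L^1_{loc}$ as in \eqref{eqgmtov}, and extracts nodal information only once, at Lebesgue points of $g$, using the uniform control on differences $g_m(b)-g_m(a)$ that the relation \eqref{eqscheme4} itself (together with the $L^\infty$ bound) provides. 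Your step concerning the restarted short-time solutions and your Lebesgue-point conclusion are in the right spirit, but the chain breaks at the nodal-to-cell-average reduction; repairing it requires working with the flux sums in this integrated, whole-time-interval sense rather than cell by cell and step by step.
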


          \begin{rem}
             The boundedness and convergence hypotheses in the theorem can be formulated in terms of the discrete solutions $\widetilde{\theta^n}(x)$ as follows, where  $N_m=k_m^{-1}T.$
             \begin{itemize}\item The set $\set{\set{\widetilde{\theta^n}(x)}_{n=1}^{N_m}\subseteq V^{k_m}}_{m=1}^\infty$ is uniformly bounded in $\fU $\,\,(in the topology ~\eqref{eqnormU}).
            \item There exists a function $v(\cdot,t)\in C([0,T],L^1_{loc}(\RR))$ so that
             \be\label{eqthetanvxt}
              \lim\limits_{m\to\infty}\sup\limits_{1\leq n\leq N_m}d(\widetilde{\theta^n}(x),v(x,t_n))=0,
             \ee
             where the metric $d(y,z)$ is given in ~\eqref{eqL1locmetric}.
             \end{itemize}
          \end{rem}

          \begin{proof}[Proof of Theorem ~\ref{thmconverge}]  We have, in view of ~\eqref{eqscheme1}, for $n=0,1,2,\ldots,N_m-1,$
          \be\label{eqscheme2}\aligned
       \int_{I_j}[\widetilde{\Upsilon^{k_m}}(x,t_{n+1})-\widetilde{\Upsilon^{k_m}}(x,t_{n})]dx\hspace{80pt}
       \\=-\int_{t_n}^{t_{n+1}}[F^{\widetilde{\Upsilon^{k_m}}(x,t_{n})}_{j+\frac12}(t-t_n)-
       F^{\widetilde{\Upsilon^{k_m}}(x,t_{n})}_{j-\frac12}(t-t_n)]dt,\\
       \,\,-\infty<j<\infty.
       \endaligned\ee

           By the convergence assumption the set
           $$
           H=\set{\widetilde{\Upsilon^{k_m}}(x,t),\,\,t\in[0,T],\,\,m=1,2,\ldots}
          $$
           is admissible in the sense of Definition ~\ref{def:initialcompact}, so  the consistency condition
                ~\eqref{eqdefnconsist} entails
               \be\label{eqreplaceFf}\aligned
               \int_{t_n}^{t_{n+1}}[F^{\widetilde{\Upsilon^{k_m}}(x,t_{n})}_{j+\frac12}(t-t_n)-
       F^{\widetilde{\Upsilon^{k_m}}(x,t_{n})}_{j-\frac12}(t-t_n)]dt\hspace{100pt}\\=
       \int_{t_n}^{t_{n+1}}[f(u(x_{j+\frac12},t-t_n;\widetilde{\Upsilon^{k_m}}(x,t_{n})))-
       f(u(x_{j-\frac12},t-t_n;\widetilde{\Upsilon^{k_m}}(x,t_{n})))]dt\\ +\mathcal{O}(k_m^{2+q}),\hspace{270pt}
       \endaligned\ee
        where the notation ~\eqref{eqdefnthetan}
       \be
       u(\cdot,t-t_n;\widetilde{\Upsilon^{k_m}}(\cdot,t_{n}))=[S(t-t_n)\widetilde{\Upsilon^{k_m}}(\cdot,t_{n})]
       \ee
       has been used.

       Here and below we use $\mathcal{O}(k_m^{2+q})$ to designate a remainder that satisfies
           $$ |\mathcal{O}(k_m^{2+q})|\leq Ck_m^{2+q},$$
           where $C>0$ depends on $H$ but is independent of $j,\,n,\,m.$

       Inserting ~\eqref{eqreplaceFf} in ~\eqref{eqscheme2} and summing over $n$ time steps yields

        \be\label{eqscheme3}\aligned
       \int_{I_j}[\widetilde{\Upsilon^{k_m}}(x,t_{n})-\widetilde{\Upsilon^{k_m}}(x,0)]dx\hspace{200pt}
      \\= -\suml_{l=0}^{n-1}\int_{t_l}^{t_{l+1}}\Big[f([S(t-t_l)\widetilde{\Upsilon^{k_m}}(\cdot,t_{l})](x_{j+\frac12}))-
       f([S(t-t_l)\widetilde{\Upsilon^{k_m}}(\cdot,t_{l})](x_{j-\frac12}))\Big]dt\\ +\mathcal{O}(k_m^{1+q}).\hspace{280pt}
       \endaligned\ee
           Take an interval $[a,b]\subseteq\RR$ and let $J^m_1<J^m_2$ be indices such that, with $h_m=\lambda^{-1}k_m,$
         $$
           J^m_1h_m=a,\quad J^m_2h_m=b.
           $$
           Summing in ~\eqref{eqscheme3} over $J^m_1\leq j\leq J^m_2$ we get
           \be\label{eqscheme4}\aligned
       \int\limits_a^b[\widetilde{\Upsilon^{k_m}}(x,t_{n})-\widetilde{\Upsilon^{k_m}}(x,0)]dx\hspace{170pt}
      \\= -\suml_{l=0}^{n-1}\int_{t_l}^{t_{l+1}}\Big[f([S(t-t_l)\widetilde{\Upsilon^{k_m}}(\cdot,t_{l})](b))-
       f([S(t-t_l)\widetilde{\Upsilon^{k_m}}(\cdot,t_{l})](a))\Big]dt\\+\mathcal{O}(k_m^{q}).\hspace{260pt}
       \endaligned
       \ee
             Take $n=N_m$ so that $t_n=T.$

             We consider the limits, as $m\to\infty,$ of the two sides in Equation ~\eqref{eqscheme4}.
   Using the convergence assumptions we obtain readily

             \be\label{eqvbalancesoln}
             \lim\limits_{m\to\infty}\int\limits_a^b[\widetilde{\Upsilon^{k_m}}(x,t_{n})-\widetilde{\Upsilon^{k_m}}(x,0)]dx
              =\int\limits_a^b[v(x,T)-v(x,0)]dx.
              \ee
              The integral in the right-hand side of ~\eqref{eqscheme4} is more complicated. Observe that by the boundedness and convergence hypotheses of the theorem, for every interval $[c,d]\subseteq\RR,$
              \be\label{eqlimrhside}
              \lim\limits_{m\to\infty}\int_c^d\suml_{l=0}^{n-1}\int_{t_l}^{t_{l+1}}f(S(t-t_l)\widetilde{\Upsilon^{k_m}}(\cdot,t_{l})(x))
       dt\,dx=\int_c^d\int_0^T f(v(x,t))dt\,dx.
       \ee
         In fact, denoting $g_m(x)=\int_0^T\suml_{l=0}^{n-1}f(S(t-t_l)\widetilde{\Upsilon^{k_m}}(\cdot,t_{l})(x))dt,\,\,g(x)=\int_0^Tf(v(x,t))dt,$ the hypotheses yield
      \be\label{eqgmtov}\lim\limits_{m\to\infty} g_m(x)=g(x)\quad \mbox{in}\,\,\,L^1([c,d]).
         \ee
         Let $c,\,d\in [a,b]$ be two Lebesgue points of $g(x).$ Inserting the limits ~\eqref{eqvbalancesoln} (with $[a,b]$ replaced by $[c,d]$) and
         ~\eqref{eqgmtov} in ~\eqref{eqscheme4} we get
         \be \int\limits_c^d[v(x,T)-v(x,0)]dx=-(g(d)-g(c))=-\int_0^T [f(v(d,t))-f(v(c,t))]dt.
         \ee
            In particular, it follows that
            \be |g(d)-g(c)|\leq C |d-c|,\quad C=2\sup\limits_{(x,t)\in[a,b]\times[0,T]}|v(x,t)|.
            \ee
            Thus $g(x)=\int_0^Tf(v(x,t))dt$ is Lipschitz continuous in $\RR$ and for any $[a,b]\subseteq \RR$
            \be
            \int\limits_a^b[v(x,T)-v(x,0)]dx=-\int_0^T [f(v(b,t))-f(v(a,t))]dt.
         \ee
            We conclude that $v(x,t)$ satisfies the requirements of Definition ~\ref{defnbalance} and is a solution to the balance law, as asserted.

             %







          \end{proof}

          It was shown (Example ~\ref{examplegodunov1}) that the Godunov approximate fluxes on piecewise-constant functions are consistent of any order while the GRP upgrading (Definition  ~\ref{defGRPflux}) is consistent of  (at least) first order, hence the following corollary holds.

          \begin{cor}[\textbf{Godunov, GRP and MUSCL convergence}]\label{corGRPconverge}\begin{enumerate}
           \item[(i)] Let the FVS ~\eqref{eqdiscbaseFVS} be given by the approximate Godunov fluxes (Example ~\ref{examplegodunov1}), confined to piecewise-constant functions. Then the limit of any convergent sequence, subject to the hypotheses  (i)-(ii) of Theorem ~\ref{thmconverge}, is a solution to the balance law.
           \item[(ii)] Let the FVS ~\eqref{eqdiscbaseFVS} be given by the approximate GRP fluxes ~\eqref{equpgradeGodflux}, where the space $V^k$ of approximating functions can be of any finite order. Then the limit of any convergent sequence, subject to the hypotheses  (i)-(ii) of Theorem ~\ref{thmconverge}, is a solution to the balance law.
               \item[(iii)] Let the FVS ~\eqref{eqdiscbaseFVS} be given by the approximate MUSCL fluxes  (Example ~\ref{exampleMUSCL}), where the space $V^k$ of approximating functions can be of any finite order. Then the limit of any convergent sequence, subject to the hypotheses  (i)-(ii) of Theorem ~\ref{thmconverge}, is a solution to the balance law.
  \end{enumerate}
   \end{cor}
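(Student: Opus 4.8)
The plan is to recognize that Corollary \ref{corGRPconverge} is a direct application of Theorem \ref{thmconverge} in each of its three cases, so that the only thing requiring verification is the hypothesis that the underlying FVS \eqref{eqdiscbaseFVS} is consistent of order $q>0$ in the sense of Definition \ref{def:consistalpha}. The boundedness and convergence hypotheses (i)--(ii) are carried over verbatim from the statement of the theorem, and the finite-volume structure of each scheme is guaranteed by Definition \ref{defnapproxsoln}. Thus I would organize the proof as three short verifications of the consistency order, each followed by an invocation of the theorem.

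For part (i), with $V^k$ the space of piecewise-constant functions, the Godunov flux \eqref{eqgodunov1} satisfies $F^{\xi}_{j+\frac12}(t)=f(u(x_{j+\frac12},t;\xi))$ exactly, because the self-similar Riemann solution is constant along the cell interface $x_{j+\frac12}$ throughout $[0,k)$. Hence the left-hand side of \eqref{eqdefnconsist} vanishes identically and the flux is consistent to any order; in particular $q>0$, as already noted in Example \ref{examplegodunov1}. For part (ii), the GRP flux \eqref{equpgradeGodflux} was shown to satisfy $\int_0^k F^{\xi}_{j+\frac12}(t)\,dt-\int_0^k f(u(x_{j+\frac12},t;\xi))\,dt=\mathcal{O}(k^3)$, so the bracketed difference in \eqref{eqdefnconsist} is $\mathcal{O}(k^{2+1})$ and the scheme is consistent of order $q=1>0$. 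For part (iii), the MUSCL flux of Example \ref{exampleMUSCL}, using the van-Leer approximation with $\beta=1$, yields $\mathcal{O}(k^3)+\mathcal{O}(k^{2+\beta})=\mathcal{O}(k^3)$, hence again $q=1>0$.

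The one point that must be handled with care, rather than being a genuine obstacle, is that Definition \ref{def:consistalpha} requires the consistency constant $C$ to depend only on the admissible set $H$ and to be uniform in $j$. In the convergence theorem this set is the family of discrete iterates, which is admissible in the sense of Definition \ref{def:initialcompact} precisely because of hypotheses (i)--(ii). For Godunov this is vacuous since the error is identically zero. For the GRP and MUSCL fluxes the constant is controlled by a bound on the derivatives of $f$ together with bounds on the instantaneous time-derivative (and slope) data entering \eqref{equpgradeGodflux}; these are in turn dominated by the uniform $\fU$-bound furnished by hypothesis (i). Once this uniformity is in hand, each of the three flux families meets the hypothesis $q>0$ of Theorem \ref{thmconverge}, and the theorem yields that the limit $v(x,t)$ of the corresponding convergent sequence is a solution of the balance law \eqref{eqbalancecons}, which is exactly the assertion of parts (i), (ii) and (iii).
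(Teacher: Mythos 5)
Your proposal is correct and follows essentially the same route as the paper: the paper proves the corollary simply by noting that the Godunov fluxes on piecewise-constant functions are consistent of any order (Example \ref{examplegodunov1}), while the GRP and MUSCL fluxes are consistent of order $q=1$ (Definition \ref{defGRPflux} and Example \ref{exampleMUSCL}), so Theorem \ref{thmconverge} applies in each case. Your additional remark about the uniformity in $j$ of the consistency constant over the admissible set $H$ is a point the paper leaves implicit (cf. Remark \ref{remCorderalpha}), and spelling it out is a sound refinement rather than a deviation.
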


          \begin{rem}\label{remconvergeq=0} A fundamental assumption in Theorem ~\ref{thmconverge} is that the approximate fluxes are consistent of order $q>0.$ Recalling the discussion in Subsection ~\ref{subsubGRP} it follows that when the Godunov approximate fluxes are implemented  for piecewise-constant functions,any limit function is a solution to the balance law, as stipulated by Corollary ~\ref{corGRPconverge}. On the other hand, taking a (spatially) second-order approximation (namely, piecewise-linear functions), and still using the Godunov approximate fluxes ~\eqref{eqGodunovpiecelinear}, the order of consistency is $q=0.$ The convergence theorem is not applicable and convergence may fail. On the other hand, as is stated in Corollary ~\ref{corGRPconverge},  implementing the GRP or MUSCL fluxes raises the order to $q=1$ and ensures that any limit function is a solution of the balance law.

          These considerations are convincingly demonstrated in the numerical examples worked out in ~\cite{Jiequan-Yue}, where the aforementioned two possibilities for approximate fluxes (with piecewise-linear data) were tested (see Figs. 6.2 and 6.5 there).
          \end{rem}

          \subsection{\textbf{A MEASURE THEORY LEMMA }}\label{subsecmeasure}

           Throughout the rest of this section we fix a $T>0.$ The time steps to be considered will be of size $k=\frac{1}{N}T$ for an integer $N>1.$

           Our final goal (Corollary ~\ref{coraverageconv}) is to prove that the \textit{grid averages} of the approximate solutions converge to the solution of the balance law obtained in Theorem ~\ref{thmconverge}. In proving this, some basic measure-theoretic facts are established.















\begin{defn}\label{defnYkav}
    Given the spacetime grid $\GRST$ ~\eqref{eqkgrid} and a function $Y(x,t)\in L^1_{loc}(\RR\times\RplusT),$ we denote by $Y^{k,av}(x,t)$ the space-averaged  function that consists of the averages in grid intervals,
      \be\label{eqYkav}Y^{k,av}(x,t)=\frac{1}{h}\int\limits_{I_j}Y(z,t)dz,\quad (x,t)\in I_j\times[0,T],
      \ee
         $$
         I_j=(x_{j-\frac12},x_{j+\frac12}),\quad -\infty<j<\infty.
         $$
         \end{defn}

         We have
         $$
         \int\limits_{I_j}Y^{k,av}(x,t)dx=\int\limits_{I_j}Y(x,t)dx
         $$
         and
         $$
         \Big|\int\limits_{I_j}Y^{k,av}(x,t)dx\Big|=
         \int\limits_{I_j}|Y^{k,av}(x,t)|dx.$$
         So integrating  over any $[a,b]\subseteq [0,T]$ yields
         \be\label{eqintYrectangle}
         \int\limits_{I_j\times[a,b]}|Y^{k,av}(x,t)|dxdt\leq
         \int\limits_{I_j\times[a,b]}|Y(x,t)|dxdt.
         \ee
          It follows that if the bounded set $K\subseteq\RR\times\RplusT$ is a union of such rectangles then
          \be\label{eqintYavY}
          \int\limits_K|Y^{k,av}(x,t)|dxdt\leq \int\limits_K|Y(x,t)|dxdt.
          \ee
          Using a density argument we now obtain:
          \begin{claim}\label{claimYavconvY}
          Let $Y(x,t)\in L^1_{loc}(\RR\times\RplusT),$ then for every bounded $K\subseteq\RR\times\RplusT,$
          \be\label{eqYavconvYloc}
         \lim_{k\to 0} \int\limits_K|Y^{k,av}(x,t)-Y(x,t)|dxdt=0.
          \ee
          \end{claim}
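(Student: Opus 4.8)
The plan is to prove \eqref{eqYavconvYloc} by a standard three--term density argument, using the contraction inequality \eqref{eqintYavY} that is already in hand together with the uniform continuity of continuous functions on compact sets. First I would reduce everything to a fixed rectangle. Since $K$ is bounded, I would choose $M>0$ with $K\subseteq R:=[-M,M]\times\RplusT$, and for each $k$ let $\widehat{R}_k$ be the smallest union of grid cells $I_j\times\RplusT$ that contains $R$. Because each cell has width $h=\lambda^{-1}k$, which tends to $0$, all the sets $\widehat{R}_k$ lie inside the fixed enlarged rectangle $R':=[-M-1,M+1]\times\RplusT$ once $k$ is small. As $\widehat{R}_k$ is a finite union of the rectangles to which \eqref{eqintYavY} applies, the averaging map $Y\mapsto Y^{k,av}$ satisfies, uniformly in small $k$,
\be\label{eqcontractionR}
\int_{\widehat{R}_k}|Y^{k,av}|\,dx\,dt\le \int_{\widehat{R}_k}|Y|\,dx\,dt\le \int_{R'}|Y|\,dx\,dt,
\ee
so that, viewed on $L^1(R')$, it is linear and of norm at most $1$, uniformly in $k$.

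Then I would fix $\eps>0$ and, using the density of $C_c(\RR\times\RplusT)$ in $L^1(R')$, pick a continuous compactly supported $Y_\eps$ with $\|Y-Y_\eps\|_{L^1(R')}<\eps$. On $K$ I would split
$$
|Y^{k,av}-Y|\le |(Y-Y_\eps)^{k,av}|+|Y_\eps^{k,av}-Y_\eps|+|Y_\eps-Y|,
$$
using linearity of the averaging to write $Y^{k,av}-Y_\eps^{k,av}=(Y-Y_\eps)^{k,av}$. Integrating over $K\subseteq\widehat{R}_k$, the third term is at most $\eps$ by the choice of $Y_\eps$, and the first term is at most $\eps$ as well, by applying the contraction \eqref{eqcontractionR} to $Y-Y_\eps$.

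It remains to handle the middle term for the continuous approximant, which is where the convergence really comes from. For $(x,t)\in I_j\times\RplusT$ one has
$$
Y_\eps^{k,av}(x,t)-Y_\eps(x,t)=\frac1h\int_{I_j}\big[Y_\eps(z,t)-Y_\eps(x,t)\big]\,dz,
$$
and since $x,z\in I_j$ forces $|z-x|\le h$, the uniform continuity of $Y_\eps$ on $R'$ gives the pointwise bound $|Y_\eps^{k,av}(x,t)-Y_\eps(x,t)|\le \omega_\eps(h)$, where $\omega_\eps$ is the modulus of continuity of $Y_\eps$. Hence $\int_K|Y_\eps^{k,av}-Y_\eps|\,dx\,dt\le \omega_\eps(h)\,|R'|$, which tends to $0$ as $k\to 0$ because $h=\lambda^{-1}k\to 0$. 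Choosing $k$ small enough that this last quantity is below $\eps$, I would obtain $\int_K|Y^{k,av}-Y|\,dx\,dt<3\eps$ for all small $k$; since $\eps>0$ is arbitrary, \eqref{eqYavconvYloc} follows.

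The only genuinely delicate point is the bookkeeping in the first step: inequality \eqref{eqintYavY} is stated for sets that are exact unions of grid cells, whereas $K$ is an arbitrary bounded set, so I must enclose $K$ in the cell--union $\widehat{R}_k$ and verify that these enclosures remain inside a single fixed rectangle $R'$ as $k\to 0$. This is what yields the \emph{uniform} $L^1$ contraction \eqref{eqcontractionR} that drives the density argument; everything else (the splitting and the uniform--continuity estimate) is routine.
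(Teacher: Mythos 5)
Your proof is correct and takes essentially the same route as the paper, which at this point simply says ``Using a density argument we now obtain'' the claim from the contraction inequality \eqref{eqintYavY}; you have merely written out the standard details of that density argument (enclosing $K$ in a fixed union of grid cells, the uniform $L^1$ bound on the averaging operator, approximation by a continuous compactly supported function, and the modulus-of-continuity estimate). Nothing further is needed.
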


       Claim ~\ref{claimYavconvY} entails the following lemma.

      \begin{lem}\label{lemstrongconvavg}
         Let $\set{w_{m}(x,t)}_{m=1}^\infty\subseteq C(\RplusT,L^1_{loc}(\RR))$ be a sequence of functions that converges to a function $w(x,t)$ in the sense that
        $$
        \lim\limits_{m\to\infty}w_{m}(\cdot,t)=w(\cdot,t),\quad\mbox{in}\,\,C(\RplusT,L^1_{loc}(\RR)).
        $$
Let $\set{k_m\downarrow 0}$ be a decreasing sequence.
       Then the sequence of the corresponding average functions $\set{w^{k_m,av}_{m}(x,t)}_{m=1}^\infty$ converges  to $w(x,t)$ in $L^1_{loc}(\RR\times\RplusT).$

      \end{lem}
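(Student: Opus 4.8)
The plan is to exploit the fact that the spatial averaging operator $(\cdot)^{k,av}$ of Definition \ref{defnYkav} is both \emph{linear} and an \emph{$L^1$ contraction} over unions of grid cells, which lets me decouple the two limits that are coupled in the statement (the mesh size $k_m$ and the sequence index $m$ tend to their limits simultaneously). Since convergence in $L^1_{loc}(\RR\times\RplusT)$ is tested on bounded sets, it suffices to prove convergence over every fixed rectangle $K_R=[-R,R]\times[0,T]$. On $K_R$ I insert the intermediate function $w^{k_m,av}$ and apply the triangle inequality
$$ \int_{K_R}|w^{k_m,av}_m-w|\,dx\,dt \leq \int_{K_R}|(w_m-w)^{k_m,av}|\,dx\,dt + \int_{K_R}|w^{k_m,av}-w|\,dx\,dt, $$
using linearity of the average to write $w^{k_m,av}_m-w^{k_m,av}=(w_m-w)^{k_m,av}$ in the first term.

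Next I would dispose of the two terms separately. The second term involves only the fixed limit function $w$; since $w\in C(\RplusT,L^1_{loc}(\RR))$ is in particular an element of $L^1_{loc}(\RR\times\RplusT)$ (the map $t\mapsto\int_{-R}^R|w(\cdot,t)|$ is continuous, hence bounded), Claim \ref{claimYavconvY} applied with $Y=w$ and $K=K_R$ shows that it tends to $0$ as $k_m\to 0$, hence as $m\to\infty$. For the first term I enlarge $K_R$ to the smallest union $\widetilde K_m$ of grid cells $I_j\times[0,T]$ containing it; because $h_m=\lambda^{-1}k_m\to 0$ one has $\widetilde K_m\subseteq [-R-1,R+1]\times[0,T]$ for all large $m$. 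The contraction estimate \eqref{eqintYavY}, valid precisely on such unions of rectangles, then yields
$$ \int_{K_R}|(w_m-w)^{k_m,av}|\,dx\,dt \leq \int_{\widetilde K_m}|(w_m-w)^{k_m,av}|\,dx\,dt \leq \int_{\widetilde K_m}|w_m-w|\,dx\,dt \leq \int_0^T\!\!\int_{-R-1}^{R+1}|w_m-w|\,dx\,dt. $$

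Finally I would identify the last integral with a quantity controlled by the hypothesis. Convergence of $w_m$ to $w$ in $C(\RplusT,L^1_{loc}(\RR))$ means, unwinding the metric \eqref{eqL1locmetric}, that $\sup_{t\in[0,T]}\int_{-R-1}^{R+1}|w_m(x,t)-w(x,t)|\,dx\to 0$; multiplying by $T$ bounds the double integral and shows the first term tends to $0$ as $m\to\infty$. Combining the two estimates gives $\int_{K_R}|w^{k_m,av}_m-w|\to 0$ for every $R$, which is the asserted $L^1_{loc}$ convergence. The main point to get right --- and the only genuine obstacle --- is the coupling of the two limiting parameters: neither term would converge if one tried to send $k\to 0$ and $m\to\infty$ in isolation, and it is the triangle-inequality splitting together with the grid-alignment enlargement $\widetilde K_m$ (harmless because $h_m\to 0$) that makes the simultaneous limit tractable.
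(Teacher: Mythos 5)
Your proof is correct and follows essentially the same route as the paper: the same triangle-inequality splitting through the intermediate function $w^{k_m,av}$, with the term $(w_m-w)^{k_m,av}$ controlled by the contraction property ~\eqref{eqintYavY} (plus the hypothesis on $w_m\to w$) and the term $w^{k_m,av}-w$ controlled by Claim ~\ref{claimYavconvY}. The paper's proof is just a terser version of yours; your added care about enlarging $K_R$ to a grid-aligned union $\widetilde K_m$ and unwinding the metric ~\eqref{eqL1locmetric} fills in details the paper leaves implicit.
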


        \begin{proof}
        Let $K\subseteq \RR\times\RplusT$ be bounded. Then in view of ~\eqref{eqintYavY}
        $$
        \lim\limits_{m\to\infty}\int\limits_K |w^{k_m,av}_{m}(x,t)-w^{k_m,av}(x,t)|dxdt=0,
        $$
        and in view of ~\eqref{eqYavconvYloc}
        $$\lim\limits_{m\to\infty}\int\limits_K |w^{k_m,av}(x,t)-w(x,t)|dxdt=0.$$
        \end{proof}

        \begin{cor}\label{coraverageconv} Assume the conditions of Theorem ~\ref{thmconverge} and define the sequence of piecewise-constant functions
        \be\label{eqUpsilonkav}
        \widetilde{\Upsilon^{k_m,av}}(x,t)=\frac{1}{h}\int\limits_{I_j}\widetilde{\Upsilon^{k_m}}(z,t)dz,\quad (x,t)\in I_j\times[0,T],
        \ee
         $$I_j=(x_{j-\frac12},x_{j+\frac12}),\quad -\infty<j<\infty.
         $$
         Then the sequence  $\set{\Upsilon^{k_m,av}_{m}(x,t)}_{m=1}^\infty$ converges  to $v(x,t)$ in $L^1_{loc}(\RR\times\RplusT).$
        \end{cor}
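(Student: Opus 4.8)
The plan is to recognize that this corollary is an immediate application of Lemma~\ref{lemstrongconvavg}, so that the real work reduces to checking that the sequence $\{\widetilde{\Upsilon^{k_m}}\}$ fits the hypotheses of that lemma. First I would fix the identification of players: set $w_m = \widetilde{\Upsilon^{k_m}}$ and $w = v$, and use the decreasing sequence $\{k_m \downarrow 0\}$ provided by Theorem~\ref{thmconverge}. With this dictionary, the averaged function $w_m^{k_m,av}$ produced by Definition~\ref{defnYkav} inside the lemma coincides exactly with $\widetilde{\Upsilon^{k_m,av}}$ as defined in \eqref{eqUpsilonkav}, since both average the same function over the same grid cells $I_j$ of the grid $\Gamma_{k_m}$.

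Next I would verify the two structural hypotheses of Lemma~\ref{lemstrongconvavg}. For membership $w_m \in C(\RplusT, L^1_{loc}(\RR))$: by its construction \eqref{eqdefnvkxt}, each $\widetilde{\Upsilon^{k_m}}(\cdot,t)$ is the interpolant that is affine in $t$ on every subinterval $[t_n,t_{n+1}]$, with endpoint values the discrete iterates $\widetilde{\theta^n}\in V^{k_m}\subseteq\fU$; since $\fU$ embeds continuously in $L^1_{loc}(\RR)$ and the $t$-dependence is continuous (indeed piecewise affine), it follows that $\widetilde{\Upsilon^{k_m}} \in C(\RplusT, L^1_{loc}(\RR))$. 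For the convergence $w_m \to w$ in $C(\RplusT, L^1_{loc}(\RR))$: this is precisely hypothesis (ii) of Theorem~\ref{thmconverge}, whose conclusion furnishes the limit function $v$ in exactly this space.

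Finally I would invoke Lemma~\ref{lemstrongconvavg} directly, which yields that $\{w_m^{k_m,av}\}=\{\widetilde{\Upsilon^{k_m,av}}\}$ converges to $v$ in $L^1_{loc}(\RR \times \RplusT)$, establishing the claim. I do not expect any genuine obstacle here: the substantive analytic content, namely that the spatial averaging commutes with the joint limit $m\to\infty$, $k_m\to 0$, has already been absorbed into the estimate \eqref{eqintYavY}, Claim~\ref{claimYavconvY}, and Lemma~\ref{lemstrongconvavg}. The only point deserving a word of care is the bookkeeping that the grid used to form the average in \eqref{eqUpsilonkav} is the very grid $\Gamma_{k_m}$ indexed by $m$, so that the diagonal averaging $w_m^{k_m,av}$ of the lemma matches the corollary's definition term by term rather than along some mismatched pairing of indices.
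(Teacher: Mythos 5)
Your proposal is correct and is exactly the argument the paper intends: Corollary~\ref{coraverageconv} is a direct application of Lemma~\ref{lemstrongconvavg} with $w_m=\widetilde{\Upsilon^{k_m}}$ and $w=v$, the required convergence in $C(\RplusT,L^1_{loc}(\RR))$ being precisely hypothesis (ii) of Theorem~\ref{thmconverge}. Your additional care about the piecewise-affine-in-$t$ structure of $\widetilde{\Upsilon^{k_m}}$ and about matching the averaging grid $\Gamma_{k_m}$ to the index $m$ is sound bookkeeping, consistent with the paper's (implicit) proof.
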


           This corollary is of great practical significance, as it states that the solution to the balance law can be recovered from the cell averages of the approximate solutions. Obviously, these averages are easier to obtain, in the computational procedure, than the full (piecewise-polynomial) approximate solutions.




%


%












          \section{\textbf{GODUNOV COMPATIBILITY AND ENTROPY  }}\label{subsecentropy} A major difficulty in the theory of nonlinear balance laws is that the limiting solutions obtained in Theorem ~\ref{thmconverge} need not be unique.   Recall that a solution to the balance law (Definition ~\ref{defnbalance}) is necessarily a weak solution to the corresponding conservation law  ~\eqref{eqconslaw}. The entropy condition (see Subsection ~\ref{subsecscalarlaw} above), essentially the only tool available for establishing uniqueness, has been applicable only in the scalar case ~\cite{dafermos,GodlewskiRaviart} (and some $2\times 2$ systems ~\cite{LiuTP}).

          It is well-known that the Godunov FVS $\Phi^{k,G}$ provides a ``reference frame'' to full classes of (first order, scalar) approximate solutions, for example to all ``$E-$schemes''~\cite[Chapter 3, Lemma 4.1]{GodlewskiRaviart}. Also, for a class of $2\times 2$ systems it was shown in ~\cite{diperna} that all limits of approximate solutions obtained by the Godunov scheme are entropy solutions.   These observations seem to justify the introduction of  the concept of ``Godunov-compatible'' schemes (Definition ~\ref{defineGodtype}). It is based on the Assumption ~\ref{assumeGodunov}  that the Godunov FVS converges to a unique entropy solution. it is then shown  that the approximate solutions produced by Godunov-compatible schemes converge to the same solution.

          The treatment here may be compared to that of the Glimm scheme: Under suitable conditions all weak solutions obtained as limits satisfy the entropy condition ~\cite[Theorem 2.2]{lax1971} and the solution is unique in the class of approximate solutions obtained by the front tracking method ~\cite{bressan} (see also ~\cite[Chapter XIV]{dafermos}).

          Let us first consider the Godunov FVS as introduced  in Example ~\ref{examplegodunov1} (for general systems, not only scalar).  The scheme is used  for first order (namely, piecewise constant) spaces $V^k.$  The notation $\Phi^{k,G}$ is used for the Godunov FVS.

           In this case the  FVS  yields a discrete sequence $\set{\widetilde{\theta^{n,G}}(x)}_{n=0}^N$ of piecewise constant functions as in ~\eqref{eqdefapproxsols}. Thus
 \be\label{eqdefthetanG}
 \widetilde{\theta^{n,G}}(x)=\widetilde{\theta^{n,G}_j},\,\,x\in I_j,\,\,-\infty<j<\infty.
 \ee


          The values $\set{\widetilde{\theta^{n,G}_j}}_{j=-\infty}^\infty $ satisfy (see ~\eqref{eqbalanceexact})
          \be\label{eqbalanceexactG}
          \aligned
           \widetilde{\theta^{n+1,G}_{j}}  -\widetilde{\theta^{n,G}_{j}}\hspace{240pt}
  \\  =-\lambda
   \Big[f(u^G(x_{j+\frac12},t-t_n;\widetilde{\theta^{n,G}}))
   -f(u^G(x_{j-\frac12},t-t_n;\widetilde{\theta^{n,G}}))\Big], -\infty<j<\infty,
   \endaligned
   \ee
   where $u^G(x_{j+\frac12},t-t_n;\widetilde{\theta^{n}})\equiv const$ is the solution to the Riemann problem at $x=x_{j+\frac12}.$

    As in Equation ~\eqref{eqdefnvkxt} we can now use the set $\set{\widetilde{\theta^{n,G}}(x)}_{n=0}^N$ in order to define the function $ \widetilde{\Upsilon^{k,G}}(x,t)$ for the Godunov scheme.

\subsection {\textbf{THE SCALAR GODUNOV SCHEME}}
   It is well-known that \textbf{ in the scalar case} the  Godunov scheme possesses the same boundedness and contraction properties as the exact solution to the balance law (Claim ~\ref{claimStproperty}):
   \begin{claim}[\textbf{\cite[Section 3.3]{GodlewskiRaviart}}] The sequence of solutions to the Godunov scheme satisfies
   \begin{itemize}
   \item
    $$
    \|\widetilde{\theta^{n+1,G}}\|_\infty\leq \|\widetilde{\theta^{n,G}}\|_\infty,\quad n=0,1,2,\ldots, N-1.
    $$
   \item If $\widetilde{\theta^{0,G}}$ and $\widetilde{\chi^{0,G}}$ are two piecewise constant functions and
   $\set{\widetilde{\theta^{n,G}}(x)}_{n=0}^N,$  $\set{\widetilde{\chi^{n,G}}(x)}_{n=0}^N$ are the corresponding solutions by the Godunov scheme, then
       $$
       \|\widetilde{\theta^{n+1,G}}-\widetilde{\chi^{n+1,G}}\|_1\leq \|\widetilde{\theta^{n,G}}-\widetilde{\chi^{n,G}}\|_1,\quad n=0,1,2,\ldots, N-1.
       $$
   \end{itemize}
   \end{claim}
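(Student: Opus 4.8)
The plan is to exploit the fact that the Godunov scheme is, by construction, the composition of the exact (entropy) solution operator over one time step with the cell-averaging projection, and then to verify that \emph{each} of these two operators is simultaneously $L^\infty$-nonincreasing and an $L^1$-contraction. The two asserted inequalities then follow by composing the corresponding estimates, with no need for discrete entropy inequalities or monotonicity arguments on the numerical flux.

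First I would identify the one-step Godunov map with $P^k\circ S(k)$. For the Godunov flux \eqref{eqgodunov1}, namely $F^\xi_{j+\frac12}(t)=f(u(x_{j+\frac12},t;\xi))$, the defining relation \eqref{eqapproxsoln} for the approximate evolution operator $\widetilde{S(k)}$ reduces exactly to the balance relation \eqref{eqbalancecons} for the exact solution; hence the cell averages of $\widetilde{S(k)}\xi$ coincide with those of $S(k)\xi$. Since $P^k$ leaves cell averages unchanged \eqref{eqinvariantave} and, in the piecewise-constant (first-order) setting, is simply the operator assigning to each $I_j$ the constant value $h^{-1}\int_{I_j}w\,dx$, it follows that $\widetilde{\theta^{n+1,G}}=P^kS(k)\widetilde{\theta^{n,G}}$. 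The CFL condition is what guarantees that on $[0,k]$ the global entropy solution emanating from piecewise-constant data is the non-interacting juxtaposition of the local Riemann solutions, so that $S(k)$ is well defined on all of $\RR$ and the exact fluxes are indeed the fluxes driving the scheme.

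Next I would record the two elementary properties of the averaging projection $P^k$. It is $L^\infty$-nonincreasing, because each cell average lies between the infimum and supremum of $w$ over $I_j$, so $\|P^kw\|_\infty\leq\|w\|_\infty$. It is an $L^1$-contraction, because on each cell $P^kw-P^kz$ is the constant $h^{-1}\int_{I_j}(w-z)\,dx$, whence
$$\int_{I_j}|P^kw-P^kz|\,dx=\Big|\int_{I_j}(w-z)\,dx\Big|\leq\int_{I_j}|w-z|\,dx,$$
and summing over $j$ gives $\|P^kw-P^kz\|_1\leq\|w-z\|_1$. The corresponding properties of the exact solution operator $S(k)$ are precisely Claim \ref{claimStproperty}: $\|S(k)u_0\|_\infty\leq\|u_0\|_\infty$ and $\|S(k)u_0-S(k)v_0\|_1\leq\|u_0-v_0\|_1$.

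Finally I would compose the estimates. For the maximum principle,
$$\|\widetilde{\theta^{n+1,G}}\|_\infty=\|P^kS(k)\widetilde{\theta^{n,G}}\|_\infty\leq\|S(k)\widetilde{\theta^{n,G}}\|_\infty\leq\|\widetilde{\theta^{n,G}}\|_\infty,$$
and for the $L^1$-contraction, applying both contraction estimates to the pair $\widetilde{\theta^{n,G}},\widetilde{\chi^{n,G}}$,
$$\|\widetilde{\theta^{n+1,G}}-\widetilde{\chi^{n+1,G}}\|_1\leq\|S(k)\widetilde{\theta^{n,G}}-S(k)\widetilde{\chi^{n,G}}\|_1\leq\|\widetilde{\theta^{n,G}}-\widetilde{\chi^{n,G}}\|_1.$$
The one genuinely non-formal point, and the step I would treat most carefully, is the identification in the second paragraph: one must be certain that within a single step the CFL-restricted global entropy solution really does factor as ``solve the local Riemann problems, then average,'' so that the discrete map faithfully inherits the contraction structure of $S(k)$ from Claim \ref{claimStproperty}. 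Once that decomposition is secured, the two inequalities are immediate.
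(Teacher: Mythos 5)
Your proof is correct, and there is nothing to compare it against inside the paper: the claim is stated there without proof, as a quotation of ~\cite[Section 3.3]{GodlewskiRaviart}. Your argument is precisely the standard one behind that citation, and it sits naturally in the paper's own framework. Indeed, by Example ~\ref{examplegodunov1} the Godunov fluxes ~\eqref{eqgodunov1} are the exact fluxes of the juxtaposed Riemann solutions, so the defining relation ~\eqref{eqapproxsoln} together with ~\eqref{eqinvariantave} identifies the one-step map of Definition ~\ref{defnapproxsoln} with $P^k S(k)$; the point you rightly single out as the crux is settled by Kru\v{z}kov uniqueness, since under the CFL condition the glued local Riemann solutions form a global entropy solution (the entropy inequalities are local and the pieces do not interact within one step), hence coincide with $S(k)\xi$, and by self-similarity the interface values are constant in time so the flux integrals are exact. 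Composing Claim ~\ref{claimStproperty} with the two elementary estimates for cell averaging ($\|P^k w\|_\infty\leq\|w\|_\infty$ and $\|P^k w-P^k z\|_1\leq\|w-z\|_1$, the latter using linearity of $P^k$ on the piecewise-constant, first-order space, where no slope limiter enters) yields both inequalities, exactly as you wrote.
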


      The fact that the approximate solutions derived by the Godunov scheme converge to the unique entropy solution is a fundamental fact of the theory of discretization of (scalar) conservation laws:

      \begin{claim}[\textbf{\cite[Chapter 3, Theorem 4.1]{GodlewskiRaviart}}] Assume that the CFL condition is satisfied and also that the initial function $u_0$ has finite total variation. Then the limit
      $$
      v(x,t)=\lim\limits_{k\to 0}\widetilde{\Upsilon^{k,G}}(x,t)
      $$
      exists in $C([0,T],L^1_{loc}(\RR))$ and is the unique entropy solution of the balance law.

      \end{claim}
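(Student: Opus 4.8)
The plan is to derive compactness from the scheme's stability estimates, to apply the abstract convergence result of Theorem~\ref{thmconverge} in order to identify a limit as a solution of the balance law, and finally to upgrade this limit to \emph{the} entropy solution by means of discrete entropy inequalities together with Kru\v{z}kov uniqueness.

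First I would assemble the uniform bounds. The $L^\infty$ bound $\|\widetilde{\theta^{n,G}}\|_\infty\leq\|u_0\|_\infty$ is immediate from the boundedness property of the preceding Claim. For a uniform total-variation bound I would exploit the $L^1$-contraction of that same Claim: comparing the Godunov iterates of $\widetilde{\theta^{0,G}}$ with those of its one-cell translate $\widetilde{\chi^{0,G}}(x)=\widetilde{\theta^{0,G}}(x+h)$, and using that the scheme commutes with shifts by $h$ on the uniform grid, one obtains
\[
h\,\mathrm{TV}(\widetilde{\theta^{n,G}})=\|\widetilde{\theta^{n,G}}(\cdot)-\widetilde{\theta^{n,G}}(\cdot+h)\|_1\leq\|\widetilde{\theta^{0,G}}(\cdot)-\widetilde{\theta^{0,G}}(\cdot+h)\|_1\leq h\,\mathrm{TV}(u_0),
\]
so that $\mathrm{TV}(\widetilde{\theta^{n,G}})\leq\mathrm{TV}(u_0)$ for every $n$. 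Reading off the update ~\eqref{eqbalanceexactG}, the Lipschitz continuity of $f$ together with this total-variation bound then gives $\|\widetilde{\theta^{n+1,G}}-\widetilde{\theta^{n,G}}\|_1\leq Ck\,\mathrm{TV}(u_0)$, i.e. Lipschitz continuity in time with values in $L^1$.

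With these three estimates in hand, Helly's selection theorem (in the form for $\mathrm{BV}$ functions of $x$ depending Lipschitz-continuously on $t$) yields, along a subsequence $k_m\downarrow 0$, convergence of $\widetilde{\Upsilon^{k_m,G}}(x,t)$ in $C([0,T],L^1_{loc}(\RR))$ to a limit $v(x,t)$ that is uniformly bounded in $L^\infty$. Since the Godunov flux acting on piecewise-constant data is consistent of arbitrary order (Example~\ref{examplegodunov1}), in particular of some order $q>0$, hypotheses (i)-(ii) of Theorem~\ref{thmconverge} hold for this sequence, and that theorem identifies $v$ as a solution of the balance law ~\eqref{eqbalancecons}.

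The remaining, and genuinely substantial, step is to show that $v$ is \emph{the} entropy solution. Under the CFL condition the Godunov scheme is monotone, so for every constant $c$ its iterates obey a discrete Kru\v{z}kov entropy inequality attached to the entropy-flux pair $\bigl(|u-c|,\ \mathrm{sgn}(u-c)(f(u)-f(c))\bigr)$. I would test this inequality against a nonnegative $\phi\in C_0^\infty$, sum over the grid, and pass to the limit exactly as in the proof of Theorem~\ref{thmconverge}, now with the entropy flux playing the role of $f$; the total-variation bound supplies the convergence of the numerical entropy fluxes to their continuum values. I expect the main obstacle to be precisely this limit passage: one must verify that the Godunov numerical entropy flux is itself a consistent approximation of $\mathrm{sgn}(u-c)(f(u)-f(c))$ in the sense of Definition~\ref{def:consistalpha}, so that the apparatus of Section~\ref{seclaxwendroff} transfers verbatim and produces the Kru\v{z}kov inequalities for $v$. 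Once these hold, Kru\v{z}kov's uniqueness theorem forces every subsequential limit to coincide with the unique entropy solution; as the limit is then independent of the chosen subsequence, the whole family $\widetilde{\Upsilon^{k,G}}(x,t)$ converges as $k\to 0$, which is the assertion.
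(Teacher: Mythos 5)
The paper offers no proof of this Claim at all --- it is stated as a known result, quoted from \cite[Chapter 3, Theorem 4.1]{GodlewskiRaviart} --- so the only meaningful comparison is with the cited textbook argument, which your proposal reconstructs correctly and in essentially the same way: $L^\infty$ stability, the translation-plus-$L^1$-contraction argument for the uniform TV bound, $L^1$-Lipschitz continuity in time, Helly compactness, discrete Kru\v{z}kov entropy inequalities, and Kru\v{z}kov uniqueness to pass from subsequential limits to convergence of the full family. One remark: the step you flag as the main obstacle (verifying consistency of the numerical entropy flux in the sense of Definition~\ref{def:consistalpha}) is not actually an obstacle for the Godunov scheme, because within each time step the scheme evolves the piecewise-constant data by the \emph{exact} entropy solution of the Riemann problems (under CFL) and then cell-averages; the discrete entropy inequality therefore follows from the exact entropy inequality together with Jensen's inequality applied to the averaging projection, with the exact entropy flux $q(u(x_{j+\frac12},t))$ at the interfaces, so the limit passage is word-for-word the one carried out for $f$ in the proof of Theorem~\ref{thmconverge} (using the infinite-order consistency of Example~\ref{examplegodunov1}) and no separate consistency verification is required.
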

      \begin{rem}
      Note that the condition that $u_0$ has finite total variation is not really needed ~\cite[Theorem 29.2]{eymard-gallouet}.
      \end{rem}

      \subsection {\textbf{THE CASE OF SYSTEMS}}
    Recall ~\eqref{eqYkav}) that the projection of $\xi\in V^k$ on the space of piecewise constant functions, namely, the set of averages in the grid intervals, is designated as
   \be\label{eqxikav}
   \xi^{k,av}(x)=h^{-1}\int_{I_j} \xi(z) dz,\quad x\in I_j=(x_{j-\frac12},x_{j+\frac12}),\,\,\,\,-\infty<j<\infty,
   \ee
    and satisfies (compare ~\eqref{eqintYrectangle})
    \be\label{eqxikvlessxi}
    \int\limits_{\RR}|\xi^{k,av}(x)|dx\leq \int\limits_{\RR}|\xi(x)|dx.
    \ee
   For general systems  consider the Godunov scheme and recall (Corollary ~\ref{corGRPconverge}) that under suitable hypotheses the approximate solutions converge to a solution of the balance law. We now impose the following \textbf{fundamental hypothesis on the Godunov scheme} regarding the uniqueness of these solutions.

    \begin{assume}\label{assumeGodunov}[\textbf{Godunov Scheme}] Let $\mathfrak{B}_K$ be the ball of radius $K>0$  in  $ \fU$ (see ~\eqref{eqdeffU}) and let $u_0\in \mathfrak{B}_K.$ Let
     $\theta^{0,G}=u_0^{k,av}.$ The Godunov scheme $\Phi^{k,G},$ applied to $\theta^{0,G}=u_0^{k,av}$ converges to a unique solution of the balance law. More precisely, if $\Phi^{k,G}$ is the FVS in Theorem ~\ref{thmconverge} then, under the hypotheses of the theorem, all limits of  subsequences $ \widetilde{\Upsilon^{k_m,G}}(x,t)$ obtained in the theorem are identical.

     Furthermore, if $v_0\in\mathfrak{B}_K$ is another initial function and $\psi^{0,G}=v_0^{k,av}$ ,  then
      \be\label{eqcontGodscheme}
         \|\Phi^{k,G}\theta^{0,G}-\Phi^{k,G}\psi^{0,G}\|_1\leq(1+Ck)\|\theta^{0,G}-\psi^{0,G}\|_1,
      \ee
      where $C>0$ depends only on $K.$
    \end{assume}

     Consider a space $V^k$ (of any order) and an FVS as in ~\eqref{eqdiscbaseFVS}.  In view of ~\eqref{eqinvariantave} and ~\eqref{eqapproxsoln} the map $\Phi^k$ is conservative:
   \be\label{eqPhikconserv}
   \int_\RR \Phi^k\xi(x) dx=\int_\RR\xi(x) dx,\quad \xi\in V^k.
   \ee

   Let $\set{\widetilde{\theta^n}\in V^k}_{n=0}^N$ be the discrete set of approximate solutions, as constructed in ~\eqref{eqdiscbaseFVS}. From them
        we obtain the function $\widetilde{\Upsilon^{k}}(x,t)$
          defined in ~\eqref{eqdefnvkxt} and the ``average function''
            $\widetilde{\Upsilon^{k,av}}(x,t)$ as in ~\eqref{eqYkav}.
            The set of cell averages of $\widetilde{\theta^n}$ is denoted by $\set{\widetilde{\theta^{n}_{j}}=
        h^{-1}\int_{I_j}\widetilde{\theta^{n}}(x)dx}_{j=-\infty}^\infty.$

    The following definition encapsulates the meaning of the FVS $\Phi^k$ as being compatible with the Godunov scheme.
   \begin{defn}\label{defineGodtype}[\textbf{Godunov Compatibility}] The FVS $\Phi^k$ (consistent of order $q>0$) is \textit{compatible with the Godunov scheme} if the following conditions hold.
    \begin{enumerate}
    \item[(i)]  The FVS $\Phi^k$ coincides with the Godunov scheme on piecewise constant functions; if $\xi\in V^k$ is piecewise constant then
         \be \Phi^k\xi=\Phi^{k,G}\xi.
         \ee
         \item[(ii)] Let $H$ be an admissible set (Definition ~\ref{def:initialcompact}). Then
          \be\label{eqPhiPhikG}
           \int\limits_{\RR}|\Phi^k\xi(x)-\Phi^{k,G}\xi^{k,av}(x)|dx=o(k),\quad \xi\in H,
          \ee
          where $o(k)$ is uniform for all $\xi\in H.$
    \end{enumerate}
    \end{defn}

    \begin{rem}\label{remGodtypeii}[\textbf{Explaining ~\eqref{eqPhiPhikG}}] This remark is intended as a {\upshape{motivation}}, certainly not a proof, for ~\eqref{eqPhiPhikG}.

    Let $\eta(x)=\Phi^k\xi(x)$ and $\chi(x)=\Phi^{k,G}\xi^{k,av}(x).$
    Observe that in light of ~\eqref{eqdefnconsist} and ~\eqref{eqtrunc}, for a fixed index $j\in \mathbb{Z},$
    \be\label{eqthetaapp}\aligned
    \int_{I_j}[\eta(x)-\xi(x)]dx=
   -\int\limits_{0}^{k}[F^{\xi}_{j+\frac12}-F^{\xi}_{j-\frac12}]dt\hspace{40pt}\\
    =-\int_0^k\Big[f(u(x_{j+\frac12},t;\xi))
   -f(u(x_{j-\frac12},t;\xi))\Big]dt+\mathcal{O}(k^{2+q})\\
   =\int_{I_j}[u(x,k;\xi)(x)-\xi(x)]dx+\mathcal{O}(k^{2+q}).\hspace{73pt}
  \endaligned\ee
  (See Assumption ~\ref{assumesoln} for the definition of $u(x,t;\xi)$).

  Since the Godunov scheme yields the exact mean value,
  \be\label{eqchiapp}
  \int_{I_j}[\chi(x)-\xi^{k,av}(x)]dx=\int_{I_j}[u(x,k;\xi^{k,av})(x)-\xi(x)]dx.
  \ee
       Subtracting ~\eqref{eqchiapp} from ~\eqref{eqthetaapp} yields
       \be\label{eqeta-chi}
       \int_{I_j}(\eta(x)-\chi(x))dx=\int_{I_j}[u(x,k;\xi)(x)-u(x,k;\xi^{k,av})(x)]dx+\mathcal{O}(k^{2+q}).
       \ee
      Assuming that the exact solution $u$ is \textit{a contraction in $L^1$,} in conjunction with the \textit{finite propagation speed property} leads to
       \be\label{eqcontractetachi}
       \int_{I_j}|\eta(x)-\chi(x)|dx\leq C_1\suml_{m=j-l}^{j+l}\int_{I_m}|\xi(x)-\xi^{k,av}(x)|dx+\mathcal{O}(k^{2+q}),
       \ee
       where $C_1>0$ and the integer $l\geq 1$ are independent of $j.$

       If the projection $P^k$ (see ~\eqref{eqinvariantave}) involves a \textit{slope limiter}, then the right-hand side of ~\eqref{eqcontractetachi} is estimated by $C_2k^2(1+\mathcal{O}(k^q)),$ and summation over $j$ leads to
       \be
       \int_{\RR}|\eta(x)-\xi(x)|dx\leq C_2k(1+\mathcal{O}(k^q)).
       \ee
       Thus, ~\eqref{eqPhiPhikG} can be understood as assuming that in at most $o(k^{-1})$ grid cells there is a significant discrepancy between $\Phi^k\xi$ and the piecewise-constant function $\Phi^{k,G}\xi^{k,av}$ obtained by application of the Godunov scheme.
    \end{rem}

    \begin{thm}\label{thmunique} Assume the validity of Assumption ~\ref{assumeGodunov} and that the FVS $\Phi^k$  is consistent of order $q>0$ and  compatible with the Godunov scheme.  Let $\set{\widetilde{\theta^n}\in V^k}_{n=0}^N$ ($N=k^{-1}T$)  be the discrete set of approximate solutions, as constructed in ~\eqref{eqdiscbaseFVS}. Then the limit function obtained in Theorem ~\ref{thmconverge} is unique, namely, under the hypotheses of the theorem there is a unique limit function for all converging subsequences.

    \end{thm}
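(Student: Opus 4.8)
The plan is to compare the approximate solutions produced by the general FVS $\Phi^k$ with those produced by the Godunov scheme $\Phi^{k,G}$, and to exploit the fact that the latter converge to a \emph{unique} limit (Assumption~\ref{assumeGodunov}). Concretely, let $\set{\widetilde{\theta^n}}_{n=0}^N$ be the FVS iterates \eqref{eqdiscbaseFVS} and set $a^n=(\widetilde{\theta^n})^{k,av}$, their piecewise-constant cell averages in the notation \eqref{eqxikav}. Let $\set{\widetilde{\theta^{n,G}}}_{n=0}^N$ be the Godunov iterates and write $b^n=\widetilde{\theta^{n,G}}$, which are already piecewise constant and satisfy $b^{n+1}=\Phi^{k,G}(b^n)$. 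Since $P^k$ preserves averages \eqref{eqinvariantave}, both sequences start from the same data: $a^0=(P^ku_0)^{k,av}=u_0^{k,av}=b^0$. I would show that $e_n:=\|a^n-b^n\|_1\to 0$ as $k\to 0$, uniformly for $0\le n\le N$, and then conclude that the two families of averaged functions share the same $L^1_{loc}$ limit.

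First I would establish a one-step comparison. Applying the Godunov-compatibility estimate \eqref{eqPhiPhikG} to $\xi=\widetilde{\theta^n}$, which belongs to the admissible set $H$ furnished in Theorem~\ref{thmconverge}, gives $\|\widetilde{\theta^{n+1}}-\Phi^{k,G}(a^n)\|_1=o(k)$, with the $o(k)$ uniform in $n$. Because cell averaging is linear and an $L^1$ contraction \eqref{eqxikvlessxi}, and because $\Phi^{k,G}(a^n)$ is piecewise constant (hence equal to its own average), this yields $\|a^{n+1}-\Phi^{k,G}(a^n)\|_1=o(k)$. Next, invoking the $L^1$ stability \eqref{eqcontGodscheme} of the Godunov scheme, valid on the piecewise-constant and uniformly bounded functions $a^n,b^n$, together with $b^{n+1}=\Phi^{k,G}(b^n)$, I obtain
\be
e_{n+1}\le\|a^{n+1}-\Phi^{k,G}(a^n)\|_1+\|\Phi^{k,G}(a^n)-\Phi^{k,G}(b^n)\|_1\le(1+Ck)e_n+o(k).
\ee
This recursion, with $e_0=0$ and $N=k^{-1}T$ steps, is handled by discrete Gr\"onwall: summing gives $e_n\le\sum_{l=0}^{n-1}(1+Ck)^l\,o(k)\le N(1+Ck)^N o(k)\le Te^{CT}\,\frac{o(k)}{k}$, which tends to $0$ as $k\to 0$ since $o(k)/k\to 0$. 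Hence $\sup_{0\le n\le N}e_n\to 0$.

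Passing to the interpolants \eqref{eqdefnvkxt}, the averaged function $\widetilde{\Upsilon^{k,av}}$ and its Godunov analogue $\widetilde{\Upsilon^{k,G}}$ differ, at each time $t\in[t_n,t_{n+1}]$, by the convex combination $\frac1k[(t_{n+1}-t)(a^n-b^n)+(t-t_n)(a^{n+1}-b^{n+1})]$, whose spatial $L^1$ norm is bounded by $\max(e_n,e_{n+1})$. Therefore $\|\widetilde{\Upsilon^{k_m,av}}-\widetilde{\Upsilon^{k_m,G}}\|_{L^1_{loc}(\RR\times\RplusT)}\to 0$. By Corollary~\ref{coraverageconv} the first factor converges to the limit $v$ of Theorem~\ref{thmconverge}, while by Assumption~\ref{assumeGodunov} the second converges to the \emph{unique} Godunov limit; hence $v$ coincides with it and is thus independent of the chosen subsequence, which is exactly the asserted uniqueness.

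I expect the main obstacle to be the bookkeeping of the error accumulation: the per-step discrepancy from \eqref{eqPhiPhikG} is only $o(k)$, and it must be summed over $N\sim k^{-1}$ steps, so the argument hinges critically on (a) the uniformity of the $o(k)$ bound over $n$ and over $\xi\in H$, and (b) the multiplicative stability factor being $1+Ck$ rather than anything larger, so that $(1+Ck)^N$ stays bounded by $e^{CT}$. One must also verify that all iterates $a^n,b^n$ remain in a fixed ball $\mathfrak{B}_K$, using the boundedness hypothesis (i) of Theorem~\ref{thmconverge}, so that the constant $C$ in \eqref{eqcontGodscheme} is uniform, and that $\widetilde{\theta^n}\in H$ at every step so that \eqref{eqPhiPhikG} remains applicable throughout the iteration.
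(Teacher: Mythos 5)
Your proposal is correct and follows essentially the same route as the paper's proof: a one-step comparison between the FVS and the Godunov evolution using the compatibility bound \eqref{eqPhiPhikG}, the stability estimate \eqref{eqcontGodscheme} and the averaging contraction \eqref{eqxikvlessxi}, followed by a discrete Gr\"onwall accumulation over $N\sim k^{-1}$ steps and an appeal to Assumption~\ref{assumeGodunov} to identify the limit. The only notable difference is that you run the recursion on the cell averages $(\widetilde{\theta^n})^{k,av}$, so the initial error vanishes identically and the conclusion is transferred to $v$ via Corollary~\ref{coraverageconv}, whereas the paper compares the full iterates $\widetilde{\theta^n}$ with $\widetilde{\theta^{n,G}}$ directly and carries the initial discrepancy $\|\widetilde{\theta^{0,G}}-\widetilde{\theta^0}\|_1$ (whose smallness is left implicit) into the final estimate \eqref{eqfinalest} --- a point your bookkeeping cleanly avoids.
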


    \begin{proof} Let $H=\set{\set{\widetilde{\theta^n}(x)}_{n=1}^{N_m}\subseteq V^{k_m}}_{m=1}^\infty$ be the convergent sequence, as in  Theorem ~\ref{thmconverge}.      The initial function $\widetilde{\theta^0}$ is given in ~\eqref{eqtheta0}.
        The idea of the proof is to compare the evolving  sequence $\set{\widetilde{\theta^n}(x)}$ with the  evolution of the Godunov scheme $\set{\widetilde{\theta^{n,G}}(x)}.$








    Let $(\widetilde{\theta^{n}})^{k_m,av}$ be the piecewise-constant  function consisting of the cell averages values of $\widetilde{\theta^{n}}$ (see ~\eqref{eqxikav}), and let
    $\widetilde{\psi^{n+1}}=\Phi^{k_m}(\widetilde{\theta^{n}})^{k_m,av}=\Phi^{k_m,G}(\widetilde{\theta^{n}})^{k_m,av}$ (see Definition ~\ref{defineGodtype}(i)).
    We have also the sequence ~\eqref{eqdefthetanG} $\set{\widetilde{\theta^{n,G}}(x)}$  obtained by the Godunov scheme, with $\widetilde{\theta^{0,G}}=(\widetilde{\theta^0})^{k_m,av}.$

     In view of ~\eqref{eqcontGodscheme} we have
    \be\label{eqdiffpsithetaav}  \|\widetilde{\theta^{n+1,G}}-\widetilde{\psi^{n+1}}\|_1 =\|\Phi^{k_m,G}\theta^{n,G}-\Phi^{k_m,G}(\widetilde{\theta^{n}})^{k_m,av}\|_1\leq(1+Ck_m)\|\widetilde{\theta^{n,G}}-(\widetilde{\theta^{n}})^{k_m,av}\|_1,
      \ee
    and invoking ~\eqref{eqxikvlessxi} leads to
    \be\label{eqdiffpsithetaav1}  \|\widetilde{\theta^{n+1,G}}-\widetilde{\psi^{n+1}}\|_1 \leq(1+Ck_m)\|\widetilde{\theta^{n,G}}-\widetilde{\theta^{n}}\|_1,
      \ee

    Since the set $H$ is admissible, the Godunov compatibility (Definition ~\ref{defineGodtype}(ii)) implies that, for any given $\eps>0,$ for $m>1$ sufficiently large,
        \be\label{eqdiffPhiGod}
    \| \widetilde{\theta^{n+1}}-\widetilde{\psi^{n+1}}\|_1\leq \eps k_m.
    \ee
    Combining ~\eqref{eqdiffpsithetaav1} and ~\eqref{eqdiffPhiGod} we get
    \be\label{eqestthetan+1} \|\widetilde{\theta^{n+1,G}}-\widetilde{\theta^{n+1}}\|_1 \leq \eps k_m+(1+Ck_m)\|\widetilde{\theta^{n,G}}-(\widetilde{\theta^{n}})^{k_m,av}\|_1.
    \ee
      It follows that
      $$\|\widetilde{\theta^{n+1,G}}-\widetilde{\theta^{n+1}}\|_1\leq \eps k_m\suml_{r=0}^n (1+Ck_m)^r+(1+Ck_m)^n \|\widetilde{\theta^{0,G}}-\widetilde{\theta^0}\|_1,$$
      and since $n\leq N_m=T (k_m)^{-1},$
      \be\label{eqfinalest}
      \|\widetilde{\theta^{n+1,G}}-\widetilde{\theta^{n+1}}\|_1\leq\eps k_m\frac{(1+Ck_m)^{N_m}-1}{Ck_m}+e^{CT}\|\widetilde{\theta^{0,G}}-\widetilde{\theta^0}\|_1.
      \ee
      By Assumption ~\ref{assumeGodunov} the sequence produced by the Godunov scheme $\widetilde{\theta^{n,G}}$ (or, rather the sequence of functions  $\widetilde{\Upsilon^{k_m,G}}(x,t)$) converge to a unique limit. Since $\eps>0$ is arbitrary this limit must be $v(x,t),$ the limit of $ \widetilde{\Upsilon^{k_m}}(x,t),$
       which is therefore unique.
\end{proof}

\begin{rem}\label{remgodunoventropy}[\textbf{Godunov Scheme and Entropy}]
  Systems that allow for entropy/ entropy-flux formulations play a special role in the study of balance laws. This is true in particular in various (hyperbolic) models of fluid dynamics. In such cases, Assumption ~\ref{assumeGodunov} can be relaxed, requiring only that all possible limit functions are entropy solutions. As is well-known, this requirement is not sufficient to ensure uniqueness. However, in this case  Theorem ~\ref{thmunique}
 can be modified (under the same hypotheses) to state that all possible limits obtained by the FVS $\Phi^k$ are entropy solutions.
\end{rem}

\begin{example}\label{examisentropy}[\textbf{Isentropic Gas Dynamics}]
Consider the Euler system of compressible, isentropic flow in one space dimension:
  \be\label{eqgasisent}\aligned
      \rho_t+(\rho u)_x=0,\\
      (\rho u)_t+(\rho u^2+p(\rho))_x=0,
      \endaligned\ee
      subject to initial conditions
      $$\rho(x,0)=\rho_0(x)\geq 0,\quad u(x,0)=u_0(x),\quad x\in\RR.
      $$
      Here $\rho$ is the density, $u$ is the velocity and the gas is polytropic: $p=k\rho^\gamma$ with $1<\gamma\leq \frac53.$
      Then we have the following corollary to Theorem ~\ref{thmunique}.
\end{example}

\begin{cor}\label{corentropysoln}
 Suppose that the FVS $\Phi^k$  is consistent of order $q>0$ and  compatible with the Godunov scheme.  Let $\set{\widetilde{\theta^n}\in V^k}_{n=0}^N$ ($N=k^{-1}T$)  be the discrete set of approximate solutions, obtained by applying $\Phi^k$ to the system ~\eqref{eqgasisent}. Then all limit functions obtained in Theorem ~\ref{thmconverge} are entropy solutions of the system.
\end{cor}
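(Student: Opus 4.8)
The plan is to read this corollary as the instantiation, for the isentropic system~\eqref{eqgasisent}, of the entropy version of Theorem~\ref{thmunique} announced in Remark~\ref{remgodunoventropy}. Accordingly, the whole argument reduces to verifying that the \emph{relaxed} form of Assumption~\ref{assumeGodunov} holds here, namely that every limit function produced by the Godunov scheme $\Phi^{k,G}$ is an \emph{entropy} solution of~\eqref{eqgasisent}, and then running the comparison between $\Phi^k$ and $\Phi^{k,G}$ at the level of entropy pairs rather than at the level of the $L^1$ contraction~\eqref{eqcontGodscheme}, which is unavailable for genuine $2\times2$ systems.

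First I would record the entropy structure of~\eqref{eqgasisent}. Away from vacuum this is a genuinely nonlinear, strictly hyperbolic $2\times 2$ system carrying the strictly convex mechanical-energy entropy together with the full family of weak entropy pairs $(\eta,q)$ generated by the entropy kernel; the restriction $1<\gamma\le\frac53$ is precisely the range in which these pairs, and the associated invariant regions in the Riemann-invariant plane, are available. I would then invoke the compensated-compactness theory: the Godunov scheme built on exact Riemann solvers keeps $\widetilde{\theta^{n,G}}$ inside a fixed invariant region, hence uniformly bounded in $L^\infty$, its entropy-dissipation measures lie in a compact subset of $H^{-1}_{loc}$, so the associated Young measures reduce to Dirac masses and $\widetilde{\Upsilon^{k_m,G}}\to v$ strongly with $v$ an entropy solution. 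This is the theorem of Ding--Chen (and DiPerna)~\cite{ding-chen,diperna}, quoted in the Introduction for exactly this system and this $\gamma$-range, and it is what substitutes for the missing $L^1$ contraction.

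With the relaxed assumption in hand I would carry out the modified proof of Theorem~\ref{thmunique}. Because exact Riemann solutions are entropy admissible and cell averaging is entropy-decreasing (Jensen), the Godunov step $\Phi^{k,G}$ obeys a discrete cell entropy inequality for every weak entropy pair $(\eta,q)$. Godunov compatibility then transfers this to $\Phi^k$: by Definition~\ref{defineGodtype}(i) the two schemes agree on piecewise-constant data, and by~\eqref{eqPhiPhikG} one has $\|\Phi^k\widetilde{\theta^n}-\Phi^{k,G}(\widetilde{\theta^n})^{k,av}\|_1=o(k)$ uniformly; since the $\eta$ are locally Lipschitz and the approximate solutions are $L^\infty$-bounded by hypothesis~(i) of Theorem~\ref{thmconverge}, the FVS inherits the same entropy inequalities up to per-step errors that sum to $o(1)$ over the $N=k^{-1}T$ steps. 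Passing to the limit along the converging subsequence---using the strong $L^1_{loc}$ convergence of the cell averages (Corollary~\ref{coraverageconv}) together with dominated convergence to obtain $\eta(\widetilde{\Upsilon^{k_m,av}})\to\eta(v)$ and $q(\widetilde{\Upsilon^{k_m,av}})\to q(v)$---yields $\eta(v)_t+q(v)_x\le 0$ in $\mathcal D'$ for each pair, so the limit $v$ supplied by Theorem~\ref{thmconverge} is an entropy solution.

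The hard part is genuinely external: it is the second step, the assertion that Godunov limits are entropy solutions, which rests on the full compensated-compactness machinery of~\cite{ding-chen} and is only available because $1<\gamma\le\frac53$. Inside the present framework the delicate point is the transfer of the discrete entropy inequalities from $\Phi^{k,G}$ to $\Phi^k$ using only the weak, $o(k)$-per-step compatibility~\eqref{eqPhiPhikG}, and checking that the accumulated entropy-production errors still tend to zero after summation over $O(k^{-1})$ time steps; this is the exact analogue, at the level of entropies, of the telescoping estimate~\eqref{eqfinalest} used in the proof of Theorem~\ref{thmunique}.
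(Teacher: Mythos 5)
Your proposal is correct in substance and, in its decisive step, coincides with the paper: the paper's entire proof of Corollary~\ref{corentropysoln} is the single observation that, by ~\cite{ding-chen}, the approximate solutions produced by the Godunov scheme for ~\eqref{eqgasisent} converge (under some additional conditions on the initial data) to entropy solutions of the system; this discharges the relaxed form of Assumption~\ref{assumeGodunov} described in Remark~\ref{remgodunoventropy}, and the conclusion follows from the modified Theorem~\ref{thmunique}. Your second paragraph is exactly this step, with the compensated-compactness background (invariant regions, $H^{-1}_{loc}$ compactness, Dirac Young measures, the range $1<\gamma\leq\frac53$) spelled out. Where you genuinely depart from the paper is the transfer mechanism from $\Phi^{k,G}$ to $\Phi^k$: Remark~\ref{remgodunoventropy} modifies Theorem~\ref{thmunique} ``under the same hypotheses'', i.e.\ the paper reuses the $L^1$ telescoping estimate ~\eqref{eqfinalest}, which rests on the quasi-contraction ~\eqref{eqcontGodscheme} retained as a hypothesis even in the relaxed setting; you instead transfer discrete cell entropy inequalities (Godunov entropy dissipation plus Jensen, then compatibility ~\eqref{eqPhiPhikG} to pass from $\Phi^{k,G}(\widetilde{\theta^n})^{k,av}$ to $\Phi^k\widetilde{\theta^n}$ with per-step $L^1$ errors of size $o(k)$ summing to $o(1)$, then a Lax--Wendroff-type passage to the limit using Corollary~\ref{coraverageconv}). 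Your route costs more machinery, but it buys something real: it never invokes ~\eqref{eqcontGodscheme}, which, as you correctly note, is not a theorem for genuine $2\times 2$ systems and in the paper remains an unverified assumption hidden in the phrase ``corollary to Theorem~\ref{thmunique}''. Two points to tighten if you write this out: Jensen's inequality requires convexity, so ``every weak entropy pair'' must be restricted to convex weak entropy pairs (the standard admissibility class, so nothing is lost); and near vacuum the local Lipschitz bound on $\eta$ in the conserved variables $(\rho,\rho u)$ needs a bound on the velocity (an invariant-region argument), not merely the $L^\infty$ bound of hypothesis (i) of Theorem~\ref{thmconverge}, a point the paper itself sidesteps with its proviso about ``additional conditions on the initial data''.
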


\begin{proof} It is shown in ~\cite{ding-chen} that, under some additional conditions on the initial data, the approximate solutions obtained by the Godunov scheme converge to entropy solutions of the system.
\end{proof}

\end{document}